\newtheorem{theorem}{Theorem}[section]
\newtheorem{lemma}[theorem]{Lemma}
\theoremstyle{definition}
\newtheorem{remark}[theorem]{Remark}
\theoremstyle{definition}
\newtheorem{definition}[theorem]{Definition}
\theoremstyle{definition}
\newcommand{\mysection}[1]{\section{#1}
\setcounter{equation}{0}}
\newcommand\cbrk{\text{$]$\kern-.15em$]$}} 
\newcommand\opar{
\text{\,\raise.2ex\hbox{${\scriptstyle |}$}\kern-.34em$($}}
 \def\dashint{%
 \operatorname%
 {\,\,\text{\bf--}\kern-.98em\DOTSI\intop\ilimits@\!\!}}
\newcommand{\nlimsup}{\operatornamewithlimits{\overline{lim}}}
\def\bH{\mathbb{H}}
\def\bL{\mathbb{L}}
\def\bR{\mathbb{R}}
\def\bN{\mathbb{N}}
\def\bD{\mathbb{D}}
\def\bS{\mathbb{S}}
\def\fR{\mathfrak{R}}
\def\fH{\mathfrak{H}}
\def\cF{\mathcal{F}}
\def\cK{\mathcal{K}}
\def\cH{\mathcal{H}}
\def\cP{\mathcal{P}}
\def\cR{\mathcal{R}}
\def\cV{\mathcal{V}}
\def\cW{\mathcal{W}}
\begin{document}

  \title[Support theorem]{Support theorem for an SPDE with multiplicative noise driven by a cylindrical Wiener process on the real line} 
  
	\author{Timur Yastrzhembskiy}
\email{yastr002@umn.edu}
\address{127 Vincent Hall, University of Minnesota,
 Minneapolis, MN, 55455}

\keywords{SPDE, cylindrical Wiener process, Stroock-Varadhan's support theorem}

  \begin{abstract}
	We prove a Stroock-Varadhan's type support theorem
	 for a  stochastic partial differential equation (SPDE) on the real line with a noise term driven by a cylindrical Wiener process on $L_2 (\bR)$.
	The main ingredients of the proof are V. Mackevi\v cius's approach to 
	support theorem for diffusion processes and N.V. Krylov's $L_p$-theory of SPDEs.
   \end{abstract}

 \maketitle
		\mysection{Introduction}
			\label{section 1}  
		Let 
		$
		  (\Omega, \cF, P)
		$ 
		be a complete probability space, 
		and  let 
		$
	 	  (\cF_t, t \geq 0)
		$
		 be an increasing filtration of $\sigma$-fields 
		$
		   \cF_t \subset \cF
		$
		 containing all $P$-null sets of $\Omega$.
		By $\cP$ we denote the predictable
		 $\sigma$-field generated by  
		  $
		   (\cF_t, t \geq 0)
		   $.

		Let
		 $
		  \bN = \{1, 2, \ldots \}
		 $, 
		$\bR$ be the real line, and  
		$
		  \bR_{+} = [0, \infty)
		$.
		Denote when it makes sense
		$$
		  	D_{x} = \frac{\partial}{\partial x},    \quad
																\partial_t = \frac{\partial }{\partial t}.
		   $$
		For a function
		 $
		   u:\bR_{+} \times \bR \to \bR
		$,
		 the temporal argument is denoted by t (or $\cdot$),
		 and the spatial argument -- by x (or $\star$).
		For a function $f:\bR \to \bR$,
		 we denote
			$$
				D f(x) = \frac{d f}{d x} (x).
			$$

		Let 
		$
		  W (t), t \in \bR_{+}
		$
		 be an $\cF_t$-adapted cylindrical Wiener process on $L_2 (\bR)$ 
		on the probability space 
		$
		 (\Omega, \cF, P)
		$
		 (see Section \ref{section 2} for the definition).
		We consider the following SPDE:
				\begin{equation}
				   \label{1.1}
			\begin{aligned}
			du (t, x)  = &  [  a (t, x) D^2_{x} u(t, x) +  b (t, x) D_x u (t, x)   + f (u, t, x) ] dt  \\
					  &  + u(t, x) dW(t),  \quad u (0, x) = u_0 (x), \, \, \, x \in \bR.
			\end{aligned}
			\end{equation}	
	Here, $a$ and $b$ are some H\"older space-valued  functions,
	$a$ is bounded from below by a positive constant,
	and
	$
	  f (u, \cdot, \star)
	$
	 is a 'zero-order' term.
	We point out that we do not assume continuity in the temporal variable for $a, b$ and $f$.	

	 In this paper we adopt N.V. Krylov's approach to parabolic SPDEs
	(see \cite{Kr_99}),
	which allows us to treat parabolic SPDEs with minimal smoothness assumptions on the coefficients and the initial data.
	Under certain conditions the equation \eqref{1.1} has a unique
	solution $u$
	 that belongs to some stochastic Banach space 
	$
	   \cH^{1/2 -\kappa }_p (T), p > 2, \kappa \in (0, 1/2]
	$
	 (see Section
	\ref{section 2}),
	which is a generalization of the parabolic counterpart of the space of Bessel potentials 
	$
	  \fH^{\gamma}_p (T).
	$
	The other approaches to the regularity theory of SPDEs can be found
	in \cite{R_90, DPZ_14, W_86}.	
	
	Our goal is to characterize the topological support of the distribution of $u$ 
	in the space 
	$
	 C^{\gamma} ([0, T], H^s_p (\bR))
	$,
	for some 
	$
	  \gamma, s \in (0, 1/2)
	$,
	 $ p > 2$,
	where 
	$
	  H^s_p (\bR)
	$
	 is the space of Bessel potentials.
	Let 
	$\cH (T)$
	 be the set of Borel functions 
	$
	  h: [0, T]\times \bR \to \bR
	$
		such that 
		$
		  \partial_t h \in B ([0, T] \times \bR) \cap L_2 ([0, T] \times \bR)
		$,
	where
	$
	  B ([0, T] \times \bR)
	$
	 is the space of bounded Borel functions.
	In Theorem \ref{theorem 2.2} we prove that the support of $u$
	coincides with the closure in the aforementioned H\"older-Bessel space
	of the set  
	$
		\fR = \{ \cR h: h \in \cH (T) \},
	$ 
	where  $\cR h$ is the  unique solution of class 
	$
	 	 \fH^{1/2 - \kappa}_p (T)
	$ 
	of the following PDE (see Definition \ref{definition 2.1} and Remark \ref{remark 2.3} (ii)):
	\begin{equation}
				\label{1.2}
	  \begin{aligned}
		\partial_t  v (t, x) &= a(t, x) D^2_{x} v (t, x) + b (t, x) D_x v (t, x) \\
																	&+ f(v, t, x) + v (t, x) \partial_t h (t, x),
																								\quad u (0, x) = u_0 (x).
	 \end{aligned}
	\end{equation}

	The support theorem for diffusion processes 
	was first proved by D. Stroock and S.R.S. Varadhan  in \cite{SV_72}.
	A different proof of this result was later given by V. Mackevi\v cius  in \cite{M_86}, where
	 the main ingredient was an approximation theorem of a Wong-Zakai type.
	This paper and V. Mackevi\v cius's proof of the Wong-Zakai theorem for diffusion processes (see \cite{M_85})
  	served as an inspiration for this article. 
	
	In case of infinitely dimensional stochastic equations the support theorems 
	were established in a number of papers.
	We will only cite the results related to parabolic SPDEs.
	In \cite{G_88} I. Gy\"ongy, adopting methods from \cite{M_85} and \cite{M_86},
	proved a support theorem 
	 for a linear SPDE on $\bR^d$ 
	  with a finite dimensional noise term. 
	In \cite{N_04} and \cite{Tw_97}   support theorems were proved
	 for SPDEs in a Hilbert space $H$ with an $H$-valued Wiener process.
	The most relevant result to ours is contained in  \cite{BMSS_95}.
	In this paper
	a support theorem  was obtained
	 for a one-dimensional nonlinear heat equation on $[0, 1]$
	 with either  Dirichlet or Neumann boundary conditions and  with a noise term 
	$
	   g(u (t, x)) dW(t)
	$.
	Here, $g$ is a sufficiently smooth function,
	 and $W$ is a cylindrical Wiener process on 
	$
	  L_2 [0, 1]
	$.
	 A similar result
	 for a one-dimensional generalized Burgers equation
	 can be found in \cite{CWM_01}.   
	In both \cite{BMSS_95} and \cite{CWM_01} the leading coefficient is equal to $1$.

	It is well-known that often it is more challenging to work with an SPDE driven by a cylindrical Wiener process
	on an unbounded domain than on a bounded interval.
	To the best of this author's knowledge,
	 there exists only one result in the literature so far 
	that is relevant to characterization of the support of the  equation \eqref{1.1}.
	In \cite{HL_18} a Wong-Zakai type theorem was proved for \eqref{1.1} 
	 with 
	$a \equiv 1, b \equiv 0 \equiv f$
	by means of M. Hairer's theory of Regularity Structures. 
	The authors showed that the sequence of Wong-Zakai type approximations converge  to the unique solution of \eqref{1.1}
	uniformly on compact subsets of 
	$
	  \bR_{+} \times \bR.
	$
	However, this result yields only one inclusion in the support theorem.

	Let us briefly describe the key steps of the proof of the main theorem of this paper.
	Our argument is similar to the one used in \cite{M_86} and \cite{G_88}.
	First, we prove an approximation theorem of a Wong-Zakai type
	 (see Theorem \ref{theorem 2.1}).
	 We replace $W$ by a 'finite-dimensional' approximation 
	$
		\sum_{k = 1}^n \phi_k (x) w^k_n (t)
	$
	and subtract a Stratonovich type correction term. 
	Here,
	 $
	   \{\phi_k, k \in \bN\}
	 $
	 is the orthonormal basis of $L_2 (\bR)$ consisting of Hermite functions,
	and	
	$
	  \{w^k, k \in \bN\}
	$
	 is a sequence of independent standard Wiener processes
	 defined by 
	$
	  w^k (\cdot) = (W(\cdot), \phi_k)_{L_2},
	 $
	and $w^k_n$ is a polygonal type approximation of $w^k$ (see Section \ref{section 2}) with some 'small' mesh size.
	To prove the approximation result
	 we use V. Mackevi\v cius's  method from \cite{M_85}, 
	which we describe below.
	We split the noise term into two parts: 
	the first one is an integral with respect to a 'regular' part 
	$
	d(w^k_n - w^k)
	$, 
	and the second one is a stochastic integral with respect to $dw^k$.
	Since
	 $
	   w^k_n - w^k
	$
	 converges to $0$ (see Lemma \ref{lemma 3.3}),
	 it makes sense to integrate by parts in the first integral.
	Then, following I. Gy\"ongy in \cite{G_88_SDE},
	we replace the solution of our approximation  scheme by its mollification
	and  integrate by parts one more time.
	As a result, we find a certain SPDE that is satisfied by the 'error' of the approximation. 
	We finish the argument by applying N.V. Krylov's $L_p$-theory of SPDEs.
	Next, one of the inclusions of the support theorem follows directly from Theorem \ref{theorem 2.1} and Portmanteau theorem.
	The other inclusion is proved
	 by  combining Theorem \ref{theorem 2.1} with Girsanov's theorem for cylindrical Wiener process.

	This author used the same method to prove a Wong-Zakai theorem and a support theorem
	for a parabolic SPDE with a finite dimensional semilinear noise term e.g. 
	$
		g((u(t, x))\, dw(t),
	$
	where $w(t)$ is a standard Wiener process 
	(see \cite{Y_18}). 
	There are three main differences between \cite{Y_18} and the present article.
	First,  some  terms that we obtain as a byproduct of integration by parts are distributions that do not belong to the $L_p$ space,
	and additional work should be done to handle them.
	Second, it can be seen from the proof of Theorem \ref{theorem 2.1} that
	 we are forced to choose a very small mesh size for $w^k_n$ because the noise is infinite dimensional.
	Third, our method fails to work if we replace $u(t, x)$ in the noise term by $g(u(t, x))$,
	where $g$ is a sufficiently smooth function such that $g(0) = 0$. 
	In particular,
	to do the integration by parts 
	we need 
	the term 
	$
	   \sum_{k = 1}^{\infty} \int_0^t (g(u(s, x)) \phi_k, \psi)_{L_2 (\bR)} \, dw^k (s), t \geq 0
	$
	 to be  a semimartingale,
		for any 
	$
	  \psi \in C^{\infty}_0 (\bR).
	$
	However, this   might not be  true, since $u$ is a solution of the equation \eqref{1.1}.
	Nevertheless, with some additional work one can use the method described above
	to prove a Wong-Zakai type theorem for a parabolic equation on $\bR$ with the noise term $g(u(t, x))\, dW(t)$.
	This will be done somewhere else.

	Finally, this author would like express his sincere gratitude to his advisor N.V. Krylov for reading a draft of this paper
	and making valuable suggestions.

		\mysection{Statement of the Main Result}
								\label{section 2}	 	
		Let $X$ be some Banach space, and $\xi$
		 be an $X$-valued random element 
		on 
		$
	 	   (\Omega, \cF, P)
		$.
		Then, by 
		$
			P \circ {\xi}^{-1}|_X
		$ 
		we denote the distribution of $\xi$,
		and by 
		$
		\text{supp} \, P \circ \xi^{-1}|_X
		$ -- the support of this probability measure.

		Let 
		$
		  C^k  = C^k (\bR), k \in \bN
		$ 
		be the space of real-valued bounded $k$ times differentiable functions with bounded derivatives up to order $k$,
		$
		  C^{\infty}_0 =  C^{\infty}_0 (\bR)
		$
		 be the space of infinitely differentiable functions with compact support.
		We denote by 
		$
		  C^{k + \alpha} =  C^{k + \alpha} (\bR), k \in \bN, \alpha \in (0, 1)
		$
		   the  H\"older space of bounded functions 
		such that derivatives up to order $k$ belong to 
		$
		  C^{\alpha} (\bR)
		$.
		For $T > 0$ finite,
		by 
		$
		  C^{k + \alpha} ([0, T], X)
		$ 
		we mean the H\"older space of  $X$-valued functions.
		For 
		$p \in [1, \infty]$,
		 we denote by 
		$
		  L_p = L_p (\bR)
		$
		  $
		   (L_p ([0, T] \times \bR))
		  $
		the space of real-valued $L_p$-integrable functions.
		Next, for $p \in (1, \infty)$,
		 we introduce spaces of Bessel potentials as follows:
		 $$
			H^{\gamma}_p  : = (1 - D^2_{x})^{- \gamma/2} L_p, 	\quad
				   										  H^{\gamma}_p (l_2) : =  (1 - D^2_{x})^{- \gamma/2} L_p (l_2).
		$$
		Here, 
		 $\gamma \in \bR$, 
		and $l_2$ is the set of all sequences of real numbers 
		$
		 h = \{h^k, k \in \bN\}
		$ 
		such that
		 $
		     |h|^2_{l_2} =  \sum_{ k = 1}^{\infty} |h^k|^2 < \infty,
		$ 
		and
		 $
		   L_p ( l_2)
		 $ 
		 is the space of sequences $h$ of 
		$L_p $
		 functions such that 
		 $
		   |h|_{l_2} \in L_p
		 $.

		 For a distribution $f$,
		 and a sequence of distributions 
		$
		  h = \{h^k, k \in \bN\}
		$, 
		we denote 
		$$
		 	|| f ||_{\gamma, p} := || ( 1 - D^2_{x})^{\gamma/2} f ||_p,
		\quad
			|| h ||_{\gamma, p} := || |(1 - D^2_{x})^{\gamma/2} h|_{l_2} ||_p,
		  $$
		where $|| \cdot ||_p$ stands for the $L_p$ norm.
		For a distribution $f$, 
		and a test function
		 $
		  g\in C^{\infty}_0
		$, 
		we denote the action of $f$ on $g$
		by   
		 $
			(f, g). 
	         $
		For any 
		$
		  f, g \in L_2
		 $, 
		their scalar product  is denoted by
		$
			(f, g)_{L_2}.
		$
		 
		The following facts about spaces
		 $
		H^{\gamma}_p, p \in (1, \infty),
		$
		  will be used in the sequel sometimes without mentioning them.
			First, for any
			 $
			   k \in \bN,
			$
			 the spaces 
			$W^k_p$
			 and
			 $H^k_p$
			coincide as sets and
			have equivalent norms.
			Here,
			 $
			   W^k_p =  W^k_p (\bR)
			 $ 
			 is  the Sobolev space of
		 	$L_p$ functions
		 	such that the generalized derivatives
			 up to order $k$ belong to $L_p$.
			  Second, 
			$$
				|| f ||_{\gamma_1, p} \leq || f ||_{\gamma_2, p}
			$$
			if 
			$
			\gamma_1 \leq \gamma_2.
			$
		Third, if 
	       $
		  \gamma \in \bR,
		$ 
		 $
			f \in H^{\gamma}_p,
		 $
		 and 
		$
		  \psi \in C^{\infty}_0,
		$
		 then
		$$
			(f, \psi) = \int_{\bR} [(1 - D^2_{x})^{\gamma/2} f (x)] [(1 - D^2_{x})^{-\gamma/2} \psi (x)] \, dx. 
		$$
		 The proof of these facts
			 and a detailed discussion of
			 $H^{\gamma}_p$
			 spaces can be found in Chapter 13 of \cite{Kr_08}.

		For any stopping time $\tau$, and 
		$
		   \gamma \in \bR, p > 1
		$,
		 we denote 	
		$
		   \opar 0, \tau \cbrk := \{ (\omega, t): 0 < t \leq \tau(\omega) \}
		$,
		   $$
			\bL_p (\tau) := L_p (\opar 0, \tau \cbrk, \cP, L_p),
		  $$
		    $$
			\mathbb{H}^{\gamma}_p (\tau) := L_p ( \opar 0, \tau \cbrk, \cP, H^{\gamma}_p),
				\quad
 				\mathbb{H}^{\gamma}_p (\tau, l_2) : = L_p ( \opar 0, \tau \cbrk, \cP, H^{\gamma}_p  (l_2)).
		   $$

		By $N (\ldots)$ we denote a constant depending 
		only on the quantities listed  inside the parenthesis.
		A constant $N$ might change from inequality to inequality.
		 In some cases, where it is clear what parameters $N$ depends on,
		we do not list them.

		The following is the definition of the stochastic Banach spaces
		 $
		   \cH^{\gamma}_p (\tau).
		$
		\begin{definition}
					\label{definition 2.1}
		Let 
		$
		  \{w^k (t), t \geq 0, k \in \bN\}
		$
		 be a sequence of
		independent $\cF_t$-adapted standard Wiener processes 
		on 
		$
		  (\Omega, \cF, P)
		$.
	   	For any
		 $\gamma \in \bR$, 
		$p \geq 2$,
		 and 
		any stopping time $\tau$,
		 we write that
		 $
		   u \in \cH^{\gamma}_p (\tau)
		$
		 if the following holds:
		\begin{enumerate} 
		 \item $u$ is a distribution-valued process, and 
			 $
				u \in  \cap_{t > 0} \bH^{\gamma}_p ( \tau \wedge t)
			$;  
		  \item
		 $ 
			D^2_{x} u \in \bH^{\gamma - 2}_p (\tau)
		$,
		    $
			 u (0, \star) \in    L_p (\Omega, \cF_0, H^{ \gamma - 2/p }_p)
		   $;
		   \item there exist
		 $
			f \in \bH^{\gamma - 2}_p (\tau)
		$
		 and
		 $
		      g = \{g^k, k \in \bN\}   \in  \bH^{\gamma - 1}_{p} (\tau, l_2)
		 $ 
		such that,
		  for any 
		 $
			\phi \in C^{\infty}_0
		$,
		 $ t \geq 0$,
		   $\omega \in \Omega$,
		\begin{equation}
				\label{1.3}
			\begin{aligned}
			(u (t \wedge \tau, \star), \phi (\star)) &
									= ( u(0, \star), \phi(\star)) + 
														\int_0^{t \wedge \tau}  (f(s, \star), \phi (\star)) \, ds\\
		 																							& + 	\sum_{k = 1}^{\infty} \int_0^{t \wedge \tau} (g^k (s, \star), \phi (\star)) \, dw^k (s).
			\end{aligned}
		\end{equation}
		\end{enumerate}
		The norm is defined in the following way:
		$$
		    || u ||_{ \cH^{\gamma}_p (\tau) }  =  ||D^2_{x} u ||_{\bH^{\gamma - 2}_p (\tau)} 
		$$
		 $$
		       + || f ||_{\bH^{\gamma - 2}_p (\tau) }   +  || g ||_{\bH^{\gamma - 1}_p (\tau, l_2) } +
				  (E   || u (0, \star) ||^p_{  \gamma - 2/p, p} )^{1/p}.
		 $$

		For $
			u \in \cH^{\gamma}_p (\tau)
			$,
		 we denote 
		 $
		    \bD u := f
		 $,
		 $
		    \bS u := g.
		$

		By 
		$
		  \fH^{\gamma}_p (T)
		$
		 we denote a subset of 
		$
		  \cH^{\gamma}_p (T)
		$
		of all functions $u$ such that 
		$
			\bS u \equiv 0,
		$
		and 
		$\bD u$
		 and 
		$u (0, \cdot)$
		 are functions independent of $\omega$.
		\end{definition}
		
			\begin{remark}
						\label{remark 2.4}
			By Remark 3.2 of \cite{Kr_99}, 
			for  any number $T > 0$,
			 the series of stochastic integrals 
			$
			\sum_{ k = 1}^{\infty} \int_0^t (g^k (s, \star), \phi (\star)) \, dw^k(s)
			$ 
			converges uniformly 
			in $t$ on $[0, T]$ in probability.
			\end{remark}

				\begin{remark}
				\label{remark 2.5}
		It was showed in Theorem 3.7 of \cite{Kr_99} that,
		for any 
		$
			\gamma \in \bR, p \geq 2
		$, 
		$
		\cH^{\gamma}_p (\tau)
		$
		 is a Banach space.
		In addition, by the same theorem 
		 if $ T > 0$ is finite,
		 and 
		$
		\tau \leq T
		$ 
		is a stopping time,
		 then,
		for any
		 $
		  v \in \cH^{\gamma}_p (\tau)
		$,
		 $$
			|| v ||_{ \mathbb{H}^{\gamma}_p (\tau) }
											 \leq N (d, T) || v ||_{\cH^{\gamma}_p (\tau) }.
		 $$	  
		It follows that, for any bounded stopping time $\tau$,
		 we may replace 
		$ 
		   ||D^2_{x} u  ||_{\mathbb{H}^{\gamma - 2}_p (\tau)}
		$
		  by
		   $
		   || u ||_{\mathbb{H}^{\gamma}_p (\tau) }
		   $
		 in the definition of the norm of 
		  $
			\cH^{\gamma}_p (\tau)
		 $
		  and obtain an equivalent norm.
		\end{remark}

		\textit{Assumptions.}
		Fix some numbers  $T, h > 0$, $\kappa \in (0, 1/2], p \geq 2$. 
	
		$(A1) (\kappa)$
		   $a (t, x)$, 
		$b ( t, x)$ 
		 are  real-valued 
		 $
		   B ([0, T] \times \bR)
		 $-measurable functions.
		For any 
		$t \in [0, T]$,	
		$
		  a(t, \star) \in C^{ 1 + 1/2 + \kappa + \eta}
		$,
		 $
		   b (t, \star) \in C^{ 1/2 + \kappa + \eta}
		 $,
		and 
		$$
			||a (t, \star) ||_{C^{ 1 +  1/2 + \kappa + \eta}} 
												+ || b (t, \star) ||_{C^{1/2 + \kappa + \eta}} \leq L,
		$$
		where $L > 0$,
		 and 
		$
		  \eta \in (0, 1/2 - \kappa)
		$ 
		are finite.
		In addition, there exists a constant 
		$\lambda > 0$ such that,
		   for all 
		$ t, x$,
		 	$$
			    \lambda   \leq a (t, x)   \leq \lambda^{-1}.
		 	$$
		
		$(A2)  (p, \kappa)$
		 $ 
		  f(u, t, x) 
		 $ 
		is a real-valued function
		defined on 
		$ 
		  \bR \times [0, T] \times \bR.
		$

		 $(i)$ For any 
		  $
		    x , u\in \bR,
		  $
		    $ f (t, x, u)$ 
		 is a Borel measurable function.

		   $(ii)$ There exists a constant $K > 0$ such that, 
			   for any  
			 $
			   t, x, u, v,
			 $
			   we have
		        $$
			     | f (u, t, x) -  f(v, t, x) | 
				\leq K | u - v |.
		         $$

		$(iii)$ 
			$
			  f (0, \cdot, \star) \in L_p( [0, T], H^{-3/2 - \kappa}_p).
			$	

		$(A3) (p, \kappa)$  
		   $
			u_0  \in  H^{1/2 - \kappa  - 2/p}_p.
		   $

		$(A4) (h)$  
		Denote 
		$
			\varkappa (x) = -1 \vee x \wedge 1, x \in \bR.
		$
		 For each $i \in \bN$,    
		   $
		    w^i (\cdot, h)
		   $ 
		is the polygonal approximation of $w^i$ with mesh size $h$ 
		defined as follows: 
		\begin{equation}
			\label{2.1}
		         w^i (t, h) := w^i (  (l-1) h ) + 1/h \, ( t -  l h ) \varkappa (w^i ( l h  ) - w^i ( (l-1) h ))
		\end{equation}
		  if
		 $
		    t \in [l h, (l+1) h),
		$
		 for some
		 $
		   l \in \mathbb{N} \cup \{0\}.
		$
		We assume here that
		 $
		  w^i (t) = 0,
		$
		 for $t \leq 0$.
		If 
		$
		   \{\gamma_n, n \in \bN\}
		$
		 is a sequence,
		then, we denote
		$
		   w^i_n (t) := w^i (t, \gamma_n).
		$

		\textit{Statement of the main result.}
		We say that 
		$
		    W (t), t \geq 0
		$ 
		is an
		 $\cF_t$-adapted
		 cylindrical Wiener process  on $L_2$
		on  
		$
		  (\Omega, \cF, P)
		$
		if the following holds:

		$(i)$ for every 
		$
		  \psi \in L_2
		$, 
		  $
		    (W(t), \psi)_{L_2}, t \geq 0
		 $
		 is an $\cF_t$-adapted standard Wiener process;
	
		$(ii)$ for any
		$t, s \geq 0$,
		and $\psi, \phi \in L_2$,
		 we have
		$$
			E (W(t), \psi)_{L_2} (W(s), \phi)_{L_2} = t \wedge s (\psi, \phi)_{ L_2}.
		$$

		The equation \eqref{1.1}  can be rewritten as follows:
		\begin{equation}
			\label{2.10}
			\begin{aligned}
			du (t, x)  = &  [ a (t, x) D^2_{x} u(t, x) +  b (t, x) D_x u (t, x)  + f (u, t, x) ] dt \\
					  &   
					  +  \sum_{k=1}^{\infty} u (t, x) \phi_k (x)  dw^k(t),  \, \, u (0, x) = u_0 (x),
		     \end{aligned}
		\end{equation}
		where \
		 $
		  \{\phi_k, k \in  \bN\}
		$ is the
		Hermite orthonormal basis of $L_2$,
		and 
		\begin{equation}
		   			\label{2.11}
		  \{w^k (\cdot)= (W(\cdot), \phi_k)_{L_2}, k \in \bN\}
		 \end{equation}
		 is a sequence of independent $\cF_t$-adapted standard Wiener processes.
		Let us recall the construction of the Hermite basis.
		First, we define the Hermite polynomials as follows:
		$$
			H_k (x) = (-1)^k e^{x^2} D^k (e^{-x^2}). 
		$$
		Then, the $k$-th member of the Hermite basis is given by
		\begin{equation}
				\label{2.4}
			\phi_k (x) = \frac{ H_k (x) e^{-x^2/2} }{ (\sqrt{\pi} 2^k k!)^{1/2} }.
		\end{equation}

		\begin{definition}
		We say that the equation \eqref{2.10} has a solution $u$
		of class 
		$
			\cH^{\gamma}_p (T)
		$ 
		if 
		  $
		     u \in \cH^{\gamma}_p (T)
		  $ 
		with
		$$
				\bD u (t, x) =a (t, x) D^2_{x} u (t, x) 
						      +   b (t, x) D_x u (t, x) +  f (u, t, x),
		$$
		  $$
			\bS u (t, x) = \{u (t, x) \phi_k (x), k \in \bN\}, 
														 \quad u (0, x) = u_0 (x).
		  $$ 	
		Recall that this implies that
		 $
			\mathbb{D} u \in \mathbb{H}^{\gamma - 2}_p (T)
		 $, 
		    $
			 \mathbb{S} u \in \mathbb{H}^{\gamma - 1}_p (T, l_2)
		    $, 
		and 
			$
			  u_0 \in L_p (\Omega, \cF_0, H^{\gamma - 2/p}_p).
			$
		\end{definition}

		 Assume that 
		$(A1) (\kappa)$,
		 $(A2) (p, \kappa)$, 
			$(A3) (p, \kappa)$ 
		hold.
	       Then, by Theorem 8.5 of \cite{Kr_99} 
		(see  Remark \ref{remark 2.2})
		the equation \eqref{2.10} 
	         has a unique solution $u$ of class 
		$
		  \cH^{1/2 - \kappa}_p (T)
		$.  
		In addition, 
		 there exists a constant 
		 $
		   N (p, \kappa, \eta, L,  K, \lambda,  T) > 0
		$
		such that the following estimate holds:
		\begin{equation}
					\label{2.14}
			  || u ||^p_{   \cH^{1/2 - \kappa}_p (T)   } \leq  
				   N  \int_0^T || f (0, t, \star) ||^p_{-3/2 - \kappa, p} \, dt
				       + N   || u_0 ||^p_{ 1/2 - \kappa - 2/p, p}.
		\end{equation}
	
		\begin{remark}
			\label{remark 2.2}
		The assumption 
		$(A2) (p, \kappa)$
		 corresponds to Assumption 8.6 of \cite{Kr_99},
		and the assumption
		 $(A3) (p, \kappa)$
		 is mentioned in the statement of Theorem 8.5 of \cite{Kr_99}. 
		However, the assumption 
		$(A1) (\kappa)$
		 is weaker than Assumption 8.5.
		Actually,
		 $(A1) (\kappa)$
		 corresponds to Assumption 5.3 and Assumption 5.5 of \cite{Kr_99}
		 with
		 $
		  n = -3/2 - \kappa
		$.
 		The conclusion of  Theorem 8.5 of \cite{Kr_99} still holds in our case,
		since  its proof is a combination of the proof of Theorem 5.1 
		(with $n = -3/2 - \kappa$)
		 and Lemma 8.4 (both are from \cite{Kr_99}).  
		\end{remark}

		\begin{remark}
			\label{remark 2.1}
		By Theorem 7.2 of \cite{Kr_99} there exists a modification of $u$, such that,
		for any $\theta$ and $\mu$ satisfying 
		$
		 1 > \mu > \theta > 2/p
		$,
		we have
		$
		  u \in C^{\theta/2 - 1/p} ([0, T], H^{1/2 - \kappa - \mu}_p)
		$,
		for all $\omega \in \Omega$. 
		Moreover,  for any stopping time $\tau \leq T$,
		$$
			E || u ||^p_{ C^{\theta/2 - 1/p} ([0, \tau], H^{1/2 - \kappa - \mu}_p) }
																  \leq N (p, \theta, \mu, T) || u ||^p_{ \cH^{1/2 - \kappa}_p (\tau) }.
		$$  
		Furthermore, if
		 $
		  \delta = 1/2  - \kappa - \mu - 1/p > 0
		$,
		then,  by the embedding theorem for $H^{s}_p$ spaces 
		(see, for example, Theorem 13.8.1 of \cite{Kr_08})
		we have
		$
		  u \in   C^{\theta/2 - 1/p} ([0, T], C^{\delta})
		$, 
		for all $\omega$.
		\end{remark}		

		Here is the statement of the main result.
		\begin{theorem}
				\label{theorem 2.2}
		Let $T > 0$,
		 and let $p > 2$, 
		  $
		   \kappa \in (0, 1/2)
		  $
		 be numbers such that 
		$
		 1/2 - \kappa > 3/p.
		$
		 We assume that
		 $
		  (A1) (\kappa)
		 $,
		  $
			(A2) (p, \kappa)
		  $,
		 $
			(A3) (p, \kappa)
		$
		 hold.
		Let 
		 $ 
		   \mathfrak{R}_{cl}
		  $
		  be the closure of $\fR$ (see Section \ref{section 1} for the definition)
		 in the space 
		$
		   \cV (T) : =C^{\theta/2 -1/p} ([0, T], H^{\varkappa}_p)
		$,
		where 
		$
		\varkappa = 1/2 - \kappa - \mu
		$,
		and
		 $\mu$ and $\theta$ are any numbers such that
		 $
		  1/2 - \kappa - 1/p > \mu > \theta > 2/p
		$.
		 Let $u$ be the unique solution of 
		\eqref{1.1} of class
		 $
		  \cH^{1/2 - \kappa}_p (T)
		$.
		Then, 
			$
			  \text{supp} \, P \circ u^{-1}|_{\cV (T)} =  \mathfrak{R}_{cl}
			$.
		\end{theorem}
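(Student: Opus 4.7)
The overall strategy, following the Mackevi\v cius/Gy\"ongy template, is to use the Wong--Zakai approximation of Theorem \ref{theorem 2.1} together with the Portmanteau theorem to obtain one inclusion and together with Girsanov's theorem for the cylindrical Wiener process to obtain the other.

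\emph{Inclusion $\operatorname{supp} P \circ u^{-1}|_{\cV(T)} \subseteq \fR_{cl}$.}
Let $\{u_n\}$ be the Wong--Zakai approximations of Theorem \ref{theorem 2.1}, obtained from \eqref{2.10} by replacing $\sum_k \phi_k\, dw^k$ by $\sum_{k=1}^n \phi_k\, \partial_t w^k_n \, dt$ and subtracting the Stratonovich-type correction. The point is that, for each $\omega$, the function
\[
h_n(\omega, t, x) := \sum_{k=1}^n \phi_k(x)\, w^k_n(t, \omega) + \text{(time-primitive of the correction)}
\]
belongs to $\cH(T)$, so $u_n(\omega, \cdot) = \cR h_n(\omega, \cdot) \in \fR$; hence the law of $u_n$ is supported in $\fR$. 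Since $u_n \to u$ in probability in $\cV(T)$, the Portmanteau theorem applied to an arbitrary open $G \subseteq \cV(T) \setminus \fR_{cl}$ gives $P(u \in G) \leq \liminf_n P(u_n \in G) = 0$.

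\emph{Inclusion $\fR_{cl} \subseteq \operatorname{supp} P \circ u^{-1}|_{\cV(T)}$.}
Given $h \in \cH(T)$ and $\epsilon > 0$, I must show $P(\|u - \cR h\|_{\cV(T)} < \epsilon) > 0$. Since $K := \partial_t h$ is deterministic, bounded, and in $L_2([0,T]\times\bR)$, Girsanov's theorem for cylindrical Wiener processes produces a measure $\tilde P \sim P$ under which $\tilde W := W - \int_0^{\,\cdot} K(s,\star)\, ds$ is a cylindrical Wiener process. Rewriting \eqref{1.1} via $dW = d\tilde W + K\, dt$ shows that, under $\tilde P$, $u$ satisfies
\[
du = [a D^2_x u + b D_x u + f(u) + u\, \partial_t h]\, dt + u\, d\tilde W,
\]
whose noise-free counterpart is precisely \eqref{1.2}, i.e.\ has $\cR h$ as its solution. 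Applying Theorem \ref{theorem 2.1} to this SPDE under $\tilde P$ yields approximations $\tilde u_n = \cR(h + \tilde h_n) \in \fR$ converging to $u$ in $\tilde P$-probability, where $\tilde h_n$ is built from the polygonal approximations $\tilde w^k_n$ of $\tilde w^k := (\tilde W, \phi_k)_{L_2}$ in the same way as $h_n$ above. By equivalence of $\tilde P$ and $P$, it suffices to find $n$ for which
\[
\tilde P\bigl(\|\tilde u_n - \cR h\|_{\cV(T)} < \epsilon/2\bigr) > \tilde P\bigl(\|u - \tilde u_n\|_{\cV(T)} > \epsilon/2\bigr).
\]
The right-hand side tends to $0$ by Wong--Zakai, and the left-hand side is positive for each fixed $n$ by continuous dependence of $\cR$ on its driver (an a priori estimate of the form \eqref{2.14} applied to $\tilde u_n - \cR h$) together with the strictly positive $\tilde P$-probability that the polygonal approximations $\tilde w^k_n$, $k \leq n$, be simultaneously uniformly small on $[0,T]$.

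\emph{Main obstacle.} The delicate step is the quantitative balancing above: as $n$ is increased to exploit Wong--Zakai, the event that finitely many polygonal Brownian trajectories are simultaneously uniformly small carries $\tilde P$-probability that may degenerate with $n$. One must therefore track the explicit rate of convergence in Theorem \ref{theorem 2.1} (which depends on the mesh $\gamma_n$ and on $n$) against the stability modulus of $\cR$, so as to select a single $n$ for which the two probabilities are on compatible scales. This mirrors the finite-dimensional argument in \cite{M_86}, but is more subtle here because the noise is infinite-dimensional and $\cR h$ lives in the anisotropic H\"older--Bessel space $\cV(T)$.
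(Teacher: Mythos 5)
Your first inclusion is correct and coincides with the paper's argument (Wong--Zakai approximations $\cR(h_n)\in\fR$ plus Portmanteau). The second inclusion, however, contains a genuine gap, and it is exactly the one you flag as the ``main obstacle'': you reduce matters to finding a single $n$ with
$\tilde P(\|\tilde u_n-\cR h\|_{\cV(T)}<\epsilon/2)>\tilde P(\|u-\tilde u_n\|_{\cV(T)}>\epsilon/2)$,
but knowing only that the left side is positive for each fixed $n$ and the right side tends to $0$ does not produce such an $n$: the left side may decay to $0$ faster than the right. Worse, as $n\to\infty$ the left side converges (up to boundary effects) to $\tilde P(\|u-\cR h\|_{\cV(T)}\le\epsilon/2)$, which is precisely the quantity whose positivity you are trying to establish, so no soft limiting argument can close the loop. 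Making your route work would require quantitative small-ball estimates for the $n$ polygonal Brownian paths balanced against an explicit rate in Theorem \ref{theorem 2.1} (i.e.\ a conditional Wong--Zakai estimate in the spirit of Stroock--Varadhan); nothing of the sort is available from the statement of Theorem \ref{theorem 2.1} alone.

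The paper avoids this entirely by running Mackevi\v cius's trick in the opposite direction, and this is the idea missing from your proposal. One does not approximate $u$ (under a shifted measure) by objects near $\cR h$; instead one approximates $\cR h$ itself. Concretely, take $\alpha=-1$, $\beta=1$ in \eqref{2.15}--\eqref{2.16} with $f$ replaced by $f+z\,\partial_t h$: since $\alpha+\beta=0$, the limit equation \eqref{2.15} is the deterministic PDE \eqref{1.2}, so its solution is $\cR h$, while the approximating equation has a solution $\hat v_n$ satisfying $\|\hat v_n-\cR h\|_{\cV(T)}\to 0$ in $P$-probability by Theorem \ref{theorem 2.1}. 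Hence $P(\|\hat v_n-\cR h\|_{\cV(T)}\le\varepsilon)$ tends to $1$ --- no balancing is needed. Girsanov's theorem (with the shift $h_n(t,x)=\sum_{k\le n}(w^k_n(t)\phi_k(x)-t\phi_k^2(x)/2)-h(t,x)$) then produces an equivalent measure $P_n$ under which $\hat v_n$ solves the original equation \eqref{2.10} driven by a new cylindrical Wiener process, so that $P_n\circ\hat v_n^{-1}|_{\cV(T)}=P\circ u^{-1}|_{\cV(T)}$; identifying these laws is itself a nontrivial step (the paper does it via Wong--Zakai schemes and Picard iteration in $\cV(T)$), and it is absent from your sketch. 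Equivalence of $P_n$ and $P$ then converts the positive $P$-probability of $\{\|\hat v_n-\cR h\|_{\cV(T)}\le\varepsilon\}$ into positivity of $P\circ u^{-1}|_{\cV(T)}$ on the $\varepsilon$-ball around $\cR h$. In short: your Girsanov shift is applied to the wrong object, and the quantitative obstacle you correctly identify is not an artifact of your write-up but a structural consequence of that choice.
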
		

		To prove the support theorem we need an approximation result that we present below.
		
		For 
		 $\alpha, \beta \in \bR$,
		we consider the following SPDE:
		\begin{equation}
	  	  \label{2.15}
		  d v (t, x) = [a (t, x) D^2_{x} v (t, x) + b (t, x) D_x v (t, x) 
		\end{equation}
	  	$$
		 									  + f (v, t, x)]\, dt 
																+ (\alpha + \beta) \sum_{k = 1}^{\infty} v (t, x) \phi_k (x) dw^k (t),
																													\, \, v (0, x) = u_0 (x).
	 	$$  
	Also, for any  sequence 
		$
		  \{\gamma_n, n \in \bN\}
		$ 
		such that $\gamma_n > 0, n \in \bN$,
	we consider the following equation:
	 \begin{equation}
		\label{2.16}
		d v_n (t, x) = [ a (t, x) D^2_{x} v_n (t, x) +  b (t, x) D_{x} v_n (t, x)  
	\end{equation}
	     $$	
				+ f (v_n, t, x) +  \alpha \sum_{ k = 1}^n v_n (t, x) \phi_k (x) D w^k_n (t) 
	    $$
	     $$
			       - (\alpha^2/2 + \alpha \beta) \sum_{k = 1}^n v_n (t, x) \phi^2_k (x)] \, dt
	     $$
	      $$	
				+ \beta \sum_{ k = 1}^{\infty} v_n (t, x) \phi_k (x) \,  dw^k (t), 
				\, \, v_n (0, x) = u_0 (x).
	      $$
	The term 
	$$
		(\alpha^2/2 + \alpha \beta) \sum_{k = 1}^n v_n (t, x) \phi^2_k (x) \, dt
	$$
	 is akin to the so-called Stratonovich correction term.
	In fact, if
	 $\alpha = 1, \beta  = 0$, 
	then, it is exactly 
	the Stratonovich correction term of a Wong-Zakai type approximation scheme of the equation \eqref{2.10} (see Definition \ref{definition 2.2}).

	Here is the statement of the approximation theorem.
		\begin{theorem}
				\label{theorem 2.1}
		Accept the conditions of Theorem \ref{theorem 2.2}. 
		In addition, assume that either
		 $
		  \alpha = 1, \beta = 0
		$
		or 
		$
		\alpha = -1, \beta = 1,
		$
		and let $v$ be the unique solution of class 
		$
		   \cH^{1/2 - \kappa}_p (T)
		$
		of the equation \eqref{2.15}.
		Then, there exists a sequence
			 $
			    \{\gamma_n, n \in \bN\}
			  $
			such that, if we additionally assume that $(A4) (\gamma_n)$
			holds,
			and let $v_n$ be the unique solution
			 of class 
			$
			\cH^{1/2 - \kappa}_p (T)
			$
			of \eqref{2.16}
			 (see Remark \ref{remark 2.3} (i)),
			then, we have
		   \begin{equation}
					\label{2.17}
			  || v_n - v||_{ \cV (T)  }   \to 0
		   \end{equation}
		   in probability as $n \to \infty$.
		\end{theorem}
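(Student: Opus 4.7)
\textbf{Proof plan for Theorem \ref{theorem 2.1}.} Setting $e_n := v_n - v$, I will show that $e_n$ is the $\cH^{1/2-\kappa}_p(T)$-solution of a linear SPDE with the same principal part $a D^2_x + b D_x$ and a Lipschitz zero-order term, driven by a forcing pair $(F_n, G_n)$ that is small in $\bH^{-3/2-\kappa}_p(T) \times \bH^{-1/2-\kappa}_p(T, l_2)$ in probability, for a suitable sequence $\gamma_n \downarrow 0$. Once this is achieved, the a priori estimate \eqref{2.14} applied to the error equation (together with $e_n(0,\cdot)=0$) yields $\|e_n\|_{\cH^{1/2-\kappa}_p(T)} \to 0$ in probability, and the embedding of Remark \ref{remark 2.1}, available because $1/2-\kappa > 3/p$, upgrades this to the required convergence in $\cV(T)$.

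Producing the error equation requires the Mackevi\v{c}ius decomposition of the polygonal noise. Writing $w^k_n(t) = (w^k_n(t)-w^k(t)) + w^k(t)$, I split the term $\alpha \sum_{k=1}^n v_n \phi_k\,D w^k_n\,dt$ into a ``regular'' piece $\alpha \sum_{k=1}^n v_n \phi_k\,d(w^k_n - w^k)$ plus a genuine stochastic piece $\alpha \sum_{k=1}^n v_n \phi_k\,dw^k$. The latter combines with the noise terms already present in \eqref{2.15} and \eqref{2.16}, producing for $k \leq n$ a martingale coefficient $\alpha e_n \phi_k$ (absorbed on the left in \eqref{2.14} by a Gronwall-type argument) and, for $k > n$, a tail martingale with coefficient $-(\alpha + \beta)\, v \, \phi_k$ that tends to $0$ in $\bH^{-1/2-\kappa}_p(T, l_2)$ by dominated convergence in $l_2$.

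The regular piece is handled by time integration by parts. Since $v_n$ is not smooth in $t$, I first pass, following Gy\"ongy \cite{G_88_SDE}, to a spatial mollification $v_n^{(\varepsilon)}$ at a scale $\varepsilon_n\to 0$, controlling the commutator error in $H^{-3/2-\kappa}_p$. It\^o's formula applied to $(v_n^{(\varepsilon)}\phi_k)(w^k_n - w^k)$ produces a boundary contribution, a drift involving $\partial_t(v_n^{(\varepsilon)}\phi_k)(w^k_n - w^k)$, a genuine stochastic integral against $dw^k$ with coefficient multiplied by the small factor $w^k_n - w^k$, and a cross-variation term; each of the first three is controlled via Lemma \ref{lemma 3.3}, which guarantees the smallness of $w^k_n - w^k$. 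The cross-variation term, summed over $k\le n$, is designed to cancel exactly against the Stratonovich-type correction $(\alpha^2/2 + \alpha\beta)\sum_{k=1}^n v_n \phi_k^2\,dt$ in \eqref{2.16}, modulo a residue that vanishes with $\varepsilon_n$; this is the purpose of the specific coefficient $\alpha^2/2 + \alpha\beta$, which is consistent in both cases $(\alpha,\beta)=(1,0)$ and $(-1,1)$.

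The main obstacle is the infinite dimensionality of the noise. The Hermite functions $\phi_k$ are not uniformly bounded in $k$, and the distributional residues from integration by parts and mollification only live in $H^{-3/2-\kappa}_p$, not in $L_p$, so every smallness estimate must be tracked through $\phi_k$-weighted sums in negative-order Bessel norms. Consequently the mesh $\gamma_n$ must be selected last and very small, as a function of $n$, $\varepsilon_n$, the decay of the Hermite tails in $C^{1+1/2+\kappa+\eta}$, and the $\cH^{1/2-\kappa}_p(T)$-bounds on $v$ and $v_n$ furnished by \eqref{2.14}. With this choice the aggregated norms of $F_n$ and $G_n$ tend to zero in probability, and \eqref{2.14} together with Remark \ref{remark 2.1} delivers \eqref{2.17}.
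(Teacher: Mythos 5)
Your plan reproduces the paper's strategy in all its essentials: the Mackevi\v{c}ius splitting of $Dw^k_n\,dt$ into $d(w^k_n-w^k)+dw^k$, integration by parts in the regular part, a Gy\"ongy-style mollification before a second integration by parts, the exact cancellation of the cross-variation against the correction $(\alpha^2/2+\alpha\beta)\sum_{k\le n} v_n\phi_k^2$, the tail martingale with coefficient $-\alpha v\phi_k$, $k>n$, killed by dominated convergence, Krylov's a priori estimate, and the embedding of Remark \ref{remark 2.1}. Two points of bookkeeping: the $k\le n$ and $k>n$ martingale coefficients are $(\alpha+\beta)(v_n-v)\phi_k$ and $\beta(v_n-v)\phi_k-\alpha v\phi_k$, not the ones you wrote; and the mollification must be taken in time as well as space (a purely spatial mollification does not address the temporal roughness that motivates it). In the paper the first integration by parts is performed directly on the semimartingale $(v_n(\cdot),\phi_i\psi)$ via Lemma \ref{lemma 3.5}, and the space-time mollification $\tilde v_n$ enters only in the term driven by $Ds^{ij}_n$, where the smallness of $v_n-\tilde v_n$ in the sup norm compensates the largeness of $Ds^{ij}_n$.

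The one genuine gap is the absence of any localization, and without it the scheme ``$(F_n,G_n)$ small in probability, hence $e_n$ small by \eqref{2.14}'' does not close. First, the forcing is not exogenous: it contains $f(v_n,\cdot)-f(v,\cdot)$, $(\alpha+\beta)(v_n-v)\phi_k$, and many terms of the form $\Delta_n v_n$, so its size is controlled by $v_n$ and by $v_n-v$ themselves; and the only moment bound on $v_n$ available from \eqref{2.14} has a constant built from $\bar K$ of Remark \ref{remark 2.3}(i), which contains $\gamma_n^{-1}\sum_{k\le n}\|\phi_k\|_\infty$ and hence blows up precisely in the regime $\gamma_n\to 0$ that you need. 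Second, \eqref{2.14} is an $L_p(\Omega)$ estimate, so convergence in probability of the forcing does not transfer to the solution without truncation. The paper resolves both issues with the stopping times $\sigma_n=\inf\{t:\|v_n-v\|_{\cV(t)}\ge 1\}$, $\pi(R)$ and $\tau_n=\sigma_n\wedge\pi(R)\wedge T$, which yield the uniform pathwise bounds \eqref{4.1} on $[0,\tau_n]$, permit a Gronwall iteration for $E\|v_n-v\|^p_{\cV(s\wedge\tau_n)}$, and are then removed by showing $P(\sigma_n\le\pi(R)\wedge T)\to 0$ and $P(\pi(R)<T)\le\varepsilon$. Relatedly, the boundary terms $\xi^{(1)}_n v_n$ and $\xi^{(2)}_n\tilde v_n$ produced by integration by parts are evaluated at time $t$ and cannot be fed into the drift of an error equation for $e_n=v_n-v$; they must either be absorbed into the unknown (the paper's $\bar v_n$) or estimated separately in the $\cV$-norm, which again relies on the localization.
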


	\begin{definition}
		\label{definition 2.2}
	Under the assumptions of Theorem \ref{theorem 2.2},
	we say that \eqref{2.16} is a Wong-Zakai type approximation scheme of \eqref{2.10} 
	 if  
	  $
		 \alpha  = 1, \beta  = 0,
	$
	and  
	$
	  \{\gamma_n, n \in \bN\}
	$ 
	is a sequence such that
	$$
		|| v_n - u||_{ \cV (T) } \to 0
	$$
	in probability as $n \to \infty$.
	Here, $v_n$ is the unique solution of \eqref{2.16} of class 
	$
	   \cH^{1/2 - \kappa}_p (T),
	$
	and $u \in  \cH^{1/2 - \kappa}_p (T)$
	 is the unique solution of \eqref{2.10}. 
	\end{definition}

	\begin{remark}
		\label{remark 2.3}
	 Assume the conditions of Theorem \ref{theorem 2.2}.
	
	$(i)$ 
	Let 
	$
	  \{ \gamma_n, n \in \bN\}
	$
	 be any sequence such that $\gamma_n > 0, n \in \bN$.
	We claim that the equation  \eqref{2.16}  has a unique solution
	$v_n$
	 of class 
	$
	  \cH^{1/2 - \kappa}_p (T).
	$
	For 
	$
	  u, x \in \bR
	$, 
	and 
	 $
		t \geq 0
	$ 
	we set 
	$$
		\bar f (u, t, x)  = f (u, t, x) +  \alpha  u  \sum_{ k = 1}^n  \phi_k (x) D w^k_n (t)
				- (\alpha^2/2 + \alpha \beta)  u  \sum_{k = 1}^n   \phi^2_k (x), 
	 $$
	  $$
		      h (u, t, x)  =  \beta u, 
	  $$
	   $$	
		\bar K = \beta + K + \gamma^{-1}_n \sum_{ k = 1}^n  || \phi_k||_{\infty}   
	   	  															+   \sum_{k = 1}^n ||\phi_k||_{\infty}^2.
	   $$
	Here and in $(ii)$,
	 $K$ is the constant from 
	$
  	    (A2) (p, \kappa).
	$ 
	 Observe that $\bar f(u, t, x)$ and $h (u, t, x)$ satisfy the Assumption 8.6 of \cite{Kr_99}
	with 
	$
		K = \bar K, 
				\xi \equiv \beta, 
						s = \infty$.
	  Hence, the claim  follows from  Theorem 8.5 of \cite{Kr_99} (see also Remark \ref{remark 2.2}).
	
	$(ii)$  For any 
		$
		   h \in \cH (T)
		$, 
		there exists a unique solution 
		$
		   \cR h \in    \fH^{1/2 - \kappa}_p (T)
		$
		   of \eqref{1.2}.
		This time one needs to set
		$$
			\bar f (u, t, x) = f (u, t, x) + u \partial_t h (t, x),  
												   \quad h (u, t, x) = 0,
		$$
		 $$
			 \bar K  = 1 +  K + ||\partial_t h ||_{ B ([0, T] \times \bR) }
		 $$
		and use the argument of $(i)$.

	\end{remark}

		\mysection{Auxiliary results.}
			\label{section 3}
	For 
	 $
		k_1, k_2 \in \bN
	$,
	and
	$
		l_1, l_2 \in \bN \cup \{0\}
	$,	 
	we denote
	$$
		\phi^{l_1, l_2}_{k_1,  k_2} (x): = |\phi_{k_1} (x)|^{l_1} |\phi_{k_2} (x)|^{l_2}.
	$$

	Set
	 $$
		\delta w^i (t, h) : = w^i (t) - w^i (t, h).
	 $$
	  $$
		s^{i j} (t, h) : = \int_0^t  \delta w^i (r, h) \, d_r w^j (r, h) - \delta_{i j} t/2.
	  $$		
	and,
	 for any sequence of positive numbers
	$
	  \{\gamma_n, n \in \bN\},
	$ 
	 we denote
	$$
		 \delta w^i_n (t)  : = \delta w^i (t, \gamma_n), \quad
													s^{i j}_n (t) : = s^{i j} (t, \gamma_n), \, \, 
	$$

		\begin{definition}
			\label{definition 3.1}
	Let 
	$
	  \{\gamma_n, n \in \bN\}
	$ 
	be a sequence of positive numbers,
	and 
	$
	  (\alpha, \beta) \in \{(1, 0), (-1, 1)\}.
	$
	We say that  a function defined on 
	$
	\Omega \times [0, T] \times \bR
	$
	 is of type
	 $\bf{\Delta}_{n} $
	 if it can be represented as 
	  $$
	 	    \sum_{ i, j = 1 }^{n}   c_{i j} \phi_{i, j}^{l_{i j}, m_{i j}} (x))  q_{i j} (t) + \sum_{i = 1}^n d_{i} D^{ k_{i} } \phi_{i} (x) \delta w^i_n (t),
	  $$
	where
	  \begin{enumerate}[label=(\roman*)]

		\item 
		$
		  c_{i j}, d_i 
		 $ are some constants,
		depending only on $\alpha$ and $\beta$,
		such that
		   $
			|c_{i j}|, |d_i| \leq 2
		   $;\\
	   \item $k_{i}, l_{i j}, m_{i j}  \in \{0, 1, 2\}$;\\
	\item $q_{i j} $ is either 
		 $
		   \delta w^i_n
		  $ or
		     $
			s^{i j}_n.
		     $
			\end{enumerate}	

	 In the sequel we denote any function of type $\bf{\Delta}_{n}$ 
		 by 
		 $\Delta_{n}$ without specifying the exact expression of $\Delta_{n}$.
	\end{definition}

	\begin{lemma}
		\label{lemma 3.5}
	Let 
	$\gamma \in \bR$, 
					  $p \geq 2$, 
	     						   $T > 0$
	be numbers,
	 and 
	 $
	  u \in \mathcal{H}^{\gamma}_p (T),
	$
	  $
	    \psi \in C^{\infty}_0.
	  $
	Denote 
	$
	  f = \bD u,
	$
	$
	  g = \{g^k, k \in \bN\} = \bS u.
	$
	Then, the following assertions hold.
	
	$(i)$
	The process
	 $
		(u (t, \star), \psi (\star)), t \geq 0
	$ 
	is a semimartingale. 
	
	$(ii)$ 
	 There exists a set
	 $\Omega'$ 
	of probability $1$ 
	such that, for any
	 $
	\omega \in \Omega',
	$
	 $
	  t \in [0, T],
	 $
	and 
	$k \in \bN$,
	 we have
	\begin{equation}
		\label{3.5.1}
		< (u (\cdot, \star), \psi (\star)), w^k (\cdot)> (t)
													 = \int_0^t (g^k (s, \star), \psi (\star)) \, ds,
	\end{equation}
	where $< , > (t)$ stands for the mutual quadratic variation of two real-valued semimartingales. 
	\end{lemma}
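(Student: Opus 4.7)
The plan is to read off the statement from the defining identity \eqref{1.3} of membership in $\cH^{\gamma}_p(T)$:
$$
(u(t\wedge T,\star),\psi(\star)) = (u(0,\star),\psi) + \int_0^{t\wedge T}(f(s,\star),\psi)\,ds + M(t\wedge T),
$$
where $M(t) := \sum_{k=1}^\infty \int_0^t (g^k(s,\star),\psi(\star))\,dw^k(s)$. The first term is a random constant in $t$ and the second is continuous and of bounded variation, so both are trivially continuous semimartingales and have vanishing covariation against $w^k$. Hence (i) reduces to showing that $M$ is a continuous local martingale, and (ii) reduces to computing $\langle M,w^k\rangle$.

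For the local-martingale property of $M$, the plan is to dualize each pairing $(g^k(s,\star),\psi)$ via the identity recalled right after Definition \ref{definition 2.1} (applied with $\gamma-1$ in place of $\gamma$) together with the $l_2$-valued Bochner form of H\"older's inequality: since $\psi\in C^\infty_0$, this gives, pointwise in $(s,\omega)$,
$$
\Big(\sum_{k=1}^\infty |(g^k(s,\star),\psi)|^2\Big)^{1/2} \le \|\psi\|_{1-\gamma,p'}\,\|g(s,\star)\|_{\gamma-1,p}.
$$
Because $p\ge 2$, a further H\"older step in $s$ shows $\int_0^T \sum_k |(g^k(s,\star),\psi)|^2\,ds < \infty$ almost surely. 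This bounds the predictable quadratic variation and, after a standard localization via Burkholder--Davis--Gundy, identifies the partial sums $M_N(t) := \sum_{k=1}^N\int_0^t(g^k(s,\star),\psi)\,dw^k(s)$ as continuous local martingales converging uniformly on $[0,T]$ in probability to a continuous local martingale $M$ (consistent with Remark \ref{remark 2.4}), which completes (i).

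For (ii), by the reduction above it suffices to evaluate $\langle M,w^k\rangle$. A direct Itô calculation yields $\langle M_N,w^k\rangle(t) = \int_0^t (g^k(s,\star),\psi)\,ds$ for $N\ge k$. To pass to the limit, the plan is to apply the Kunita--Watanabe inequality:
$$
|\langle M-M_N,w^k\rangle(t)|^2 \le T\,\langle M-M_N\rangle(T) = T\sum_{j>N}\int_0^T |(g^j(s,\star),\psi)|^2\,ds,
$$
whose right-hand side tends to zero almost surely by the a.s.\ finiteness established above. This identifies $\langle M,w^k\rangle(t)$ with $\int_0^t(g^k(s,\star),\psi)\,ds$ for each fixed $t,k$ outside a null set.

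The only real obstacle is the bookkeeping of null sets: the conclusion demands a single $\Omega'$ of full measure on which \eqref{3.5.1} holds simultaneously for \emph{all} $t\in[0,T]$ and \emph{all} $k\in\bN$. This will be handled by noting that both sides of \eqref{3.5.1} are a.s.\ continuous in $t$, so equality on a countable dense subset of $[0,T]$ extends to all $t$, and then intersecting the resulting countable family of full-measure events (one per $k$) to produce $\Omega'$.
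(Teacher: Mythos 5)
Your proposal is correct and follows essentially the same route as the paper: the same decomposition into the bounded-variation part $\int_0^t(f(s,\star),\psi)\,ds$ and the stochastic series $M$, the $H^{\gamma}_p$ duality pairing plus H\"older to control $\sum_k|(g^k(s,\star),\psi)|^2$, Burkholder--Davis--Gundy for the convergence of the partial sums, and It\^o plus Kunita--Watanabe for the covariation. The only (harmless) deviations are that you bound the bracket pathwise and conclude $M$ is a continuous local martingale where the paper estimates in expectation and obtains a square-integrable martingale, and your a.s.\ control of the tail $\sum_{j>N}\int_0^T|(g^j(s,\star),\psi)|^2\,ds$ lets you avoid the paper's passage to a subsequence in part (ii).
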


	\begin{proof}
	$(i)$
	 For any $\omega \in \Omega, t \in [0, T]$, we have (see Remark \ref{remark 2.4})
	$$
		(u(t, \star), \psi (\star)) = (u (0, \star), \psi (\star)) + F(t) + G(t),
	$$
	where
	  $$
		F (t) = \int_0^t (f (s, \star), \psi (\star)) \, ds,
	  $$
	    $$
		G (t) = \sum_{k = 1}^{\infty} \int_0^t (g^k (s, \star), \psi (\star)) \, dw^k (s).
	    $$

	First, we show that $F$ has a finite variation on $[0, T]$ a.s.
	It suffices to prove that
	$$
		\hat F : = \int_0^T |(f (s, \star), \psi (\star))| \, ds  < \infty \, \text{a.s.}
	$$
	Recall that by the definition of stochastic Banach spaces 
	$
	  f \in \cH^{\gamma - 2}_p (T).
	$
	 By the properties of $H^s_p$ spaces (see Section \ref{section 2})
	 and H\"older's inequality we have
	 $$
		\hat F \leq \int_0^T \int_{\bR} |[(1 - D^2_{x})^{(\gamma - 2)/2} f (s, x)] [(1 - D^2_{x})^{(2 - \gamma)/2} \psi (x)]| \, dx  \, ds
	 $$
	   $$
			\leq T^{p'} || \psi ||_{ 2 - \gamma, p'}  (\int_0^T || f (s, \star) ||^p_{ \gamma - 2, p})^{1/p} < \infty \, \, \text{a.s.},
	   $$
	where $p' = p/(p-1)$.
	By this  we only need to show that 
	$
	  G (t), t \geq 0
	$
	 is a martingale.

	Next, denote
	$$
		G_n (t) := \sum_{k = 1}^{n} \int_0^{t} (g^k (s, \star), \psi (\star)) \, dw^k (s).
	$$
	We will show that
	\begin{equation}
		\label{3.5.2}
		\lim_{n \to \infty} E \sup_{t \leq T} | G_n (t) - G (t) |^2 = 0, 
	\end{equation}
	and, by this $G$ is a  square integrable continuous martingale.
	 First, using Burkholder-Davis-Gundy inequality,
	 for any 
	$
	  n \in \mathbb{N}
	$, 
	and any  
	$
	  m \in \mathbb{N} \cup \{\infty\}
	$
	 such that $m  \geq n$,
	 we get
		\begin{align*}
		&  E \sup_{t \leq T} |  \sum_{ k = n }^m  \int_0^t  (g^k (s, \star), \psi (\star)) \, dw^k (s) |^2 \\
																					&\leq V_{n, m} := 3 E \sum_{k = n}^m \int_0^T |(g^k (s, \star), \psi (\star))|^2 \, ds.
		\end{align*}
		Second, by repeating the argument of Remark 3.2 of \cite{Kr_99} we obtain
	 \begin{equation}
	   \begin{aligned}
		\label{3.5.3}
		V_{n, m} & \leq        N E \int_0^T || (\sum_{ k = n}^m |(1 - D^2_{x})^{(\gamma - 1)/2}  g^k (s, \star)|^2)^{1/2}||^2_{p} \, ds \\
																										&\leq N T^{(p - 2)/p}  || g ||^p_{\mathbb{H}^{\gamma - 1}_p (T)},	
	   \end{aligned}
	 \end{equation}
	where 
	$
	  N = 3  || (1 - D^2_{x})^{ (1 - \gamma)/2 } \psi ||_1
												 ||  (1-  D^2_{x})^{(1  - \gamma)/2 } \psi ||_{ p/(p-2)}.
	$
	Then,  \eqref{3.5.2} holds,
	and this implies the assertion $(i)$.

	$(ii)$
	Using
	 linearity of mutual quadratic variation and It\^o's formula,
	 for all $\omega$, 
	and $t \in [0, T]$, 
	and $n \geq k$,
	we have
	$$
		< G_{n}, w^k> (t)
 							= 	\int_0^t (g^k (s, \star), \psi (\star)) \, ds.
	$$
	Thus, there exists a set $\Omega'$ of probability $1$ such that,
	for any 
	$\omega \in \Omega'$, 
	and every 
	$ t \in [0, T]$, 
		$k \in \bN$,
	\begin{equation}
		\label{3.5.7}
		<  G, w^k> (t)
					 = \lim_{n' \to \infty} <  G_{n'}, w^k> (t)
				  									      = \int_0^t (g^k (s, \star), \psi (\star))  \, ds,
	\end{equation}
	where the $n'$ is some subsequence.
	Here, the passage to the limit is justified by Kunita-Watanabe inequality and \eqref{3.5.3}.	
	\end{proof}

		\begin{lemma}
		\label{lemma 3.1}
	Assume the conditions of Theorem \ref{theorem 2.1}.
	Take any  sequence 
		$
		  \{\gamma_n, n \in \bN\}
		$
		with positive terms,
		 and let $v_n$ be the unique solution
			 of class 
			$
			\cH^{1/2 - \kappa}_p (T)
			$
			of \eqref{2.16}.
	 Let $h (t, x) = h (\omega, t, x)$
	be a function such that, 
	for any 
	$
	 \omega \in \Omega
	$,
	 $
		h \in C^2_{loc} ([0, T]\times \bR).
	 $
 	Denote
	   $$
		   \xi^{(1)}_n (t, x) =    \alpha \sum_{i=1}^n  \delta w^i_n (t) \phi_{i} (x), 
														  		  \quad \xi^{(2)}_n (t, x) = \alpha^2 \sum_{i, j = 1}^n  s^{i j}_n (t) \phi^{1, 1}_{i, j} (x),
	   $$
	      $$	
		\bar v_n (t, x) = v_n (t, x) -  v(t, x)
								  +   \xi^{(1)}_n (t, x) v_n (t, x)  
																				- \xi^{(2)}_n (t, x) h (t, x).
	      $$	    
		
	Then, (a.s.) for all
	    $ t \in [0, T]$, 
					$\psi \in C^{\infty}_0$,
	 the function $\bar v_n$ satisfies the following equation:
	\begin{equation}\
		\label{3.1.0.1}
		              z (t, x) = \int_0^t ([a (s, \star) D^2_{x} z (s, \star)  + b (s, \star) D_{x} z (s, \star)], \psi (\star) ) \, ds 
	\end{equation}	
	  $$
		+ \sum_{k = 1}^8 \int_0^t (F^{(k)}_{ n} (s, \star), \psi (\star) ) \, ds  + \sum_{k = 1}^3 \sum_{i = 1}^{\infty}  \int_0^t (G^{(k)}_{n, i} (s, \star),  \psi (\star))\, dw^i (s),
	  $$
        where
	      $$
		       F^{(1)}_n (s, x) =    a (s, x) D_x (v_n (s, x)  \Delta_{n} (s, x)), 
	      $$
	       $$
		       F^{(2)}_n (s, x) =  (a (s, x) \Delta_n (s, x) + b (s, x) \Delta_n (s, x)) v_n (s, x),
	       $$
		$$
			 F^{(3)}_n (s, x) =  a (s, x) D^2_{x} ( \xi^{(2)}_n (s, x) h (s, x) ) + b (s, x) D_{x} (\xi^{(2)}_n (s, x) h (s, x)),
	        $$
	         $$	       
		      F^{(4)}_n (s, x) =   f (v_n, s, x)  -  f (v, s, x),
	         $$
		  $$	  
			F^{(5)}_n (s, x) =  \Delta_{n} (s, x) f (v_n, s, x),
		  $$
		    $$		 
			  F^{(6)}_n (s, x) =    \Delta_{n} (s, x) v_n (s, x),
	  	    $$
		     $$
			  F^{(7)}_n (s, x) =      \Delta_{n} (s, x) \partial_s h (s, x),
		     $$
		      $$
			  F^{(8)}_n (s, x) =  \sum_{i, j =  1}^n \alpha^2 (v_n (s, x) - h (s, x)) \phi^{1, 1}_{i, j} (x) D s^{i j}_n (s),
		      $$
			$$
				G^{(1)}_{ n, i } (s, x) =  (\alpha + \beta) (v_n (s, x)  -  v (s, x)) \phi_i (x),  i = 1, \ldots, n,
		 	$$
			 $$
												\quad   G^{(1)}_{n, i} (s, x) = \beta  (v_n (s, x)  -  v (s, x)) \phi_i (x) , i > n,
			 $$
	 		   $$
				G^{(2)}_{ n, i} (s, x) =  0, i = 1, \ldots, n, 
													 \quad G^{(2)}_{ n, i } (s, x) =   - \alpha    v (s, x) \phi_i (x),  i > n,
	  		  $$
	    		   $$         	
		     	 	G^{(3)}_{ n, i } (s, x) =    \Delta_{n} (s, x)  v_n (s, x)  \phi_i (x), i \in \bN.
	     		  $$
		Here, all the functions $\Delta_{n}$ are
		possibly different functions of type ${\bf \Delta_{n} }$.

	\end{lemma}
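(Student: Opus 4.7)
The strategy is to compute $d\bar v_n$ by applying Itô's product rule to each of the three new pieces of $\bar v_n = (v_n - v) + \xi^{(1)}_n v_n - \xi^{(2)}_n h$, substituting the SPDEs \eqref{2.15} and \eqref{2.16}, and rearranging by spatial integration by parts so that the result matches \eqref{3.1.0.1}. Since $s^{ij}_n(t) = \int_0^t \delta w^i_n(r)\, d_r w^j_n(r) - \delta_{ij}t/2$ is a Stieltjes integral against the polygonal path $w^j_n$, it has finite variation; hence so does $\xi^{(2)}_n$ and the product $\xi^{(2)}_n h$, and no Itô correction arises from $d(\xi^{(2)}_n h)$. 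The only genuine Itô correction comes from $d(\xi^{(1)}_n v_n)$, and the validity of the product rule at the distributional level (testing against $\psi \in C^\infty_0$) is granted by Lemma \ref{lemma 3.5}.

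The key algebraic cancellations come from the Stratonovich-type correction in \eqref{2.16}. Writing $d(\xi^{(1)}_n v_n) = \xi^{(1)}_n\, dv_n + v_n\, d\xi^{(1)}_n + d\langle\xi^{(1)}_n, v_n\rangle$ and using $d(\delta w^i_n) = dw^i - D w^i_n\,dt$, the $-v_n \alpha \sum_i \phi_i\, Dw^i_n\,dt$ portion of $v_n\, d\xi^{(1)}_n$ cancels the finite-dimensional noise term $\alpha\sum_k v_n\phi_k Dw^k_n\,dt$ in $dv_n$; and the cross-variation $d\langle\xi^{(1)}_n,v_n\rangle = \alpha\beta\sum_i v_n\phi_i^2\,dt$ cancels the $-\alpha\beta\sum_k v_n\phi_k^2\,dt$ part of the correction. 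The residual $-(\alpha^2/2)\sum_{k=1}^n v_n\phi_k^2\,dt$ combines with the $dt$-contribution in $-h\,d\xi^{(2)}_n$ and with the term $\alpha^2 v_n\sum_{ij}\delta w^i_n\phi_i\phi_j Dw^j_n\,dt$ arising from $\xi^{(1)}_n\!\cdot\!\alpha\sum_k v_n\phi_k Dw^k_n\,dt$ inside $\xi^{(1)}_n\,dv_n$, producing precisely $F^{(8)}_n = \alpha^2\sum_{ij}(v_n-h)\phi^{1,1}_{i,j}\,Ds^{ij}_n$.

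For the second-order terms in $\xi^{(1)}_n\,dv_n$, I would apply two integrations by parts to shift derivatives off $v_n$: $a\xi^{(1)}_n D^2 v_n = aD^2(\xi^{(1)}_n v_n) - 2aD(v_n D\xi^{(1)}_n) + a v_n D^2\xi^{(1)}_n$, and $b\xi^{(1)}_n Dv_n = bD(\xi^{(1)}_n v_n) - b v_n D\xi^{(1)}_n$. Because $D\xi^{(1)}_n = \alpha\sum_i\delta w^i_n D\phi_i$ and $D^2\xi^{(1)}_n = \alpha\sum_i\delta w^i_n D^2\phi_i$ are functions of type ${\bf\Delta}_n$, the non-divergence remainders land exactly in $F^{(1)}_n$ and $F^{(2)}_n$. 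The divergence pieces $aD^2(\xi^{(1)}_n v_n)$ and $bD(\xi^{(1)}_n v_n)$, together with the original $aD^2(v_n-v) + bD(v_n-v)$ from $dv_n - dv$, reassemble into $aD^2\bar v_n + bD\bar v_n$ once $aD^2(\xi^{(2)}_n h) + bD(\xi^{(2)}_n h) = F^{(3)}_n$ is split off.

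The remaining terms are immediate: $f(v_n) - f(v)$ is $F^{(4)}_n$; the products $\xi^{(1)}_n f(v_n)$, $\xi^{(1)}_n(-(\alpha^2/2+\alpha\beta))\sum_k v_n\phi_k^2$, and $-\xi^{(2)}_n\partial_t h$ become $F^{(5)}_n$, $F^{(6)}_n$, $F^{(7)}_n$ respectively, since each prefactor is of type ${\bf\Delta}_n$. On the stochastic side, the $dw^k$-parts of $dv_n - dv$, augmented by the $\alpha v_n\sum_i\phi_i\, dw^i$ from $v_n\,d\xi^{(1)}_n$, split into the $k\leq n$ and $k>n$ cases to produce $G^{(1)}_{n,i}$ and $G^{(2)}_{n,i}$, while $\xi^{(1)}_n\beta\sum_k v_n\phi_k\, dw^k$ yields $G^{(3)}_{n,i}$. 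The main obstacle is purely organizational: every substitution must be tracked so that nothing is lost or double-counted, and the passage from a formal Itô calculation to the distributional identity \eqref{3.1.0.1} must be justified by Lemma \ref{lemma 3.5} together with the convergence of the infinite stochastic series noted in Remark \ref{remark 2.4}.
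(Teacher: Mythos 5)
Your proposal is correct and is essentially the paper's own argument viewed in the opposite direction: the paper starts from $(v_n - v, \psi)$ and integrates by parts in the $d(w^i_n - w^i)$ integrals (producing the boundary terms $\xi^{(1)}_n v_n$ and $-\xi^{(2)}_n h$), whereas you differentiate $\bar v_n$ directly, but the cancellations, the use of Lemma \ref{lemma 3.5} for the semimartingale property and the bracket $\langle \xi^{(1)}_n, v_n\rangle$, the distributional Leibniz expansions producing $F^{(1)}_n$--$F^{(3)}_n$, and the identification of the remainders as type ${\bf \Delta_n}$ are all identical.
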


	\begin{proof}
	For the sake of convenience,
         we omit the dependence of functions on the spatial variable.
	Also, every time a new function of type ${\bf \Delta_n}$ appears,
	we explicitly write the constants 
	$
	  d_i, c_{ i j }, i, j = 1, \ldots, n
	$
	 to demonstrate that
	the condition $(i)$ of Definition \ref{definition 3.1} holds.

	Keeping in mind V. Mackevi\v cius's method (see Section \ref{section 1}),  
	 we subtract \eqref{2.15} from \eqref{2.16} and formally write the 'stochastic' part of $v_n - v$ as follows:
	$$
		\alpha \sum_{i = 1}^n  v_n (t) \phi_i \, dw^i_n (t) + \beta \sum_{i  = 1}^{\infty} v_n (t) \phi_i \, dw^i (t) 
	$$
	 $$
			- (\alpha + \beta) \sum_{i = 1}^{\infty} v (t) \phi_i \, dw^i (t)
	 $$
	  $$
		= \alpha \sum_{ i = 1}^n v_n (t) \phi_i \, d(w^i_n (t) - w^i (t))
	 $$
	   $$
		+ (\alpha + \beta) \sum_{ i = 1}^n (v_n (t) - v(t)) \phi_i \, dw^i (t)
	   $$
	    $$
		 + \beta   \sum_{i = n+1}^{\infty} (v_n (t) - v (t)) \phi_i \, dw^i (t)
	    $$
	     $$
		 - \alpha \sum_{ i = n+1}^{\infty}  v (t) \phi_i \, dw^i (t).
	     $$
	By the above, 
	 for any 
	$
	\psi \in C^{\infty}_0
	$, 
					$t \in [0, T]$,
							   $\omega$, 
	the function  $v_n - v$ satisfies the following equation:
	 \begin{equation}
		\label{3.1.1}
		(v_n (t) - v (t)), \psi) = \sum_{k =1 }^{8} I^{(k)}_n (t),
	   \end{equation}
	where
		$$
		I^{(1)}_n (t)  =  \int_0^t (a (s) D^2_{x} [v_n (s) - v (s)] , \psi)\, ds,
		$$
		 $$
		     I^{(2)}_n (t) = \int_0^t (b (s) D_x [v_n (s) - v (s)], \psi) \, ds,
		 $$
		  $$
		     I^{(3)}_n (t) =  \int_0^t  (f (v_n, s) - f (v, s) , \psi) \, ds,
		  $$
		    $$
		 	I^{(4)}_n (t) =  \alpha \sum_{i  = 1}^n \int_0^t   (v_n (s) \phi_{i}, \psi) \, d(w^{i}_n (s) - w^i (s)),
		    $$
		     $$
		       I^{(5)}_n (t)=  (\alpha + \beta) \sum_{i = 1}^n   \int_0^t  ( [v_n (s)  - v (s)]  \phi_{i}, \psi) \, dw^i (s), 
		     $$
		      $$
		         I^{(6)}_n (t) =  \beta   \sum_{ i= n + 1}^{\infty} \int_0^t  ( [v_n (s) - v (s)] \phi_i, \psi)\,  dw^i (s),
		      $$
		       $$
			  I^{(7)}_n (t) = - \alpha \sum_{i = n + 1}^{\infty} \int_0^t ( v (s) \phi_i, \psi) \, dw^i (s),
		       $$
		            $$
		    	     I^{(8)}_n (t) =  - (\alpha^2/2 + \alpha \beta) \sum_{i = 1}^n \int_0^t ( v_n (s) \phi_{i}^2, \psi) \,ds.
		          $$
	In what follows, all the identities hold a.s., for all $t \in [0, T]$.

	Note that 
	$
	  \phi_i \psi \in C^{\infty}_0,
	$
	 and, then,
	 by Lemma \ref{lemma 3.5} $(i)$
	the process 
	$
	  (v_n (t), \phi_i \psi), t \geq 0
	$
	 is a semimartingale.
	Using integration by parts formula for semimartingales, we get
	\begin{equation}
		\label{3.1.2}
		I^{(4)}_n (t) =  - (v_n (t) \xi^{(1)}_n (t),  \psi) 
		    + I^{(4, 1)}_n (t) + I^{(4, 2)}_n (t),
	 \end{equation}
	where
	  \begin{equation}
		\label{3.1.3}
		I^{(4, 1)}_n (t)  =   \alpha \sum_{ i = 1}^n \int_0^t \delta w^i_n (s) \,d(v_n (s), \phi_i \psi),
	  \end{equation}
	   $$
		 I^{(4, 2)}_n (t) =    \alpha \sum_{i = 1}^n <(v_n (\cdot),  \phi_i \psi),  w^i (\cdot)> (t). 
	   $$
		By Lemma \ref{lemma 3.5} $(ii)$ we have
		\begin{equation}
			\label{3.1.4}
			I^{(4, 2)}_n (t) 	=   \alpha \beta \sum_{ i = 1}^n \int_0^t (v_n (s) \phi_i^2, \psi) \, ds.
		\end{equation}
	
	Next, using  associativity of stochastic integral, we  write
	\begin{equation}
		\label{3.1.5}
		I^{(4, 1)}_n  (t) =  \sum_{k = 1}^6   I^{(4, 1, k)}_n (t),
	\end{equation}
	  where
	  $$
		I^{(4, 1, 1)}_n (t) = \alpha \sum_{i = 1}^{n} \int_0^t  (a (s) D^2_{x} v_n (s),  \phi_i \psi) \delta w^{i}_n (s)  \, ds,
	  $$
           $$
		   I^{(4, 1, 2)}_n (t) = \alpha \sum_{ i  = 1}^n  \int_0^t  (b (s) D_{x} v_n (s), \phi_i \psi) \delta w^{i}_n (s) \, ds,
	   $$
	    $$
	  	  I^{(4, 1, 3)}_n  (t) = \alpha  \sum_{i  = 1}^n \int_0^t  (f (v_n, s), \phi_i \psi) \delta w^{i}_n (s) \, ds,
	    $$
	       $$
			I^{(4, 1, 4)}_n (t)=  \alpha^2 \sum_{ i, j =  1}^n  \int_0^t   (v_n (s), \phi_{i, j}^{1,1}  \psi) \,\delta w^{i}_n (s) \, D w^j_n (s) \, ds,
	       $$
		$$
			I^{(4, 1, 5)}_n (t)= - \alpha (\alpha^2/2 + \alpha \beta) \sum_{ i, j  = 1}^n \int_0^t  (v_n (s), \phi_{i, j}^{1, 2}  \psi) \, \delta w^{i}_n (s) \, ds
		$$
		 $$
			= \int_0^t ( \Delta_{n} (s) v_n (s), \psi) \, ds,
																 \, \, d_i = 0, c_{i j} = -  \alpha (\alpha^2/2 + \alpha \beta), 
		 $$
		  $$
			I^{(4, 1, 6)}_n (t)=  \alpha \beta  \sum_{ j = 1}^{\infty}  \sum_{ i = 1}^n \int_0^t   (v_n (s), \phi_i  \phi_j \psi)  \delta w^{i}_n (s)   \,   dw^j (s)
		  $$
		    $$
			 =  \sum_{ j =  1}^{\infty} \int_0^t (  \Delta_{n} (s) v_n (s) \phi_j, \psi) dw^j (s),  
																			\, \,  d_i = \alpha \beta,  c_{i j} = 0.
		    $$

	In this paragraph we show that in 
	$
	  I^{(4, 1, k)}_n (t), k = 1, 2, 3
	$
	 one may replace each distribution by its product with $\phi_i$. 
	First, note that, for any
	 $\omega, s, i$,
	we have 
	$
	   a(s) \phi_i \in C^{1 + 1/2 + \kappa + \eta}.
	$
	Then, it follows from Lemma 5.2 $(i)$ of \cite{Kr_99} that 
 	$$
		 \phi_i D^2_{x} v_n (\cdot),   a (\cdot) \phi_i D^2_{x} v_n (\cdot)  \in \mathbb{H}^{-3/2 - \kappa}_p (T).
	$$ 
	Further, by the standard approximation argument combined with Lemma 5.2 $(i)$ of \cite{Kr_99},
	the following identity holds (in the sense of distributions):
	$$
		\phi_i D^2_{x} v_n (s) =   D^2_{x} (v_n (s) \phi_i) - 2 D_x (v_n (s) D \phi_i) + v_n (s) D^2 \phi_i.
	$$ 
	Then, we get
	\begin{equation}
		\label{3.1.6}
		I^{(4, 1, 1)}_n  (t) = \sum_{k = 1}^3 I^{(4, 1, 1, k)}_n (t),
	\end{equation}
	where
	 \begin{equation}
		\label{3.1.7}
	  \begin{aligned}
		I^{(4, 1, 1, 1)}_n (t)& = \alpha \sum_{i = 1}^n  \int_0^t (a(s) D^2_{x} [v_n (s) \phi_i], \psi) \, \delta w^i_n (s) \, ds  \\
																									&=  \int_0^t  (a (s) D^2_{x}  [ \xi^{(1)}_n (s)  v_n (s)] ,\psi) \, ds,
	  \end{aligned}
	     \end{equation}
	   \begin{equation}
		\label{3.1.8}
	     \begin{aligned}
		I^{(4, 1, 1, 2)}_n (t) & =  -2 \alpha \sum_{i  = 1}^n \int_0^t   ( a(s) D_x [v_n (s)  D \phi_i], \psi) \delta w^i_n (s) \, ds \\
																								&=\int_0^t ( a (s) D_x [ \Delta_{n} (s) v_n (s)], \psi) \, ds,   
																																				\, \,   d_{i} = -2\alpha, c_{i j} = 0,  
	     \end{aligned}
	    \end{equation}
	\begin{equation}
			\label{3.1.8.1}
	  \begin{aligned}
		I^{(4, 1, 1, 3)}_n (t)& =  \alpha \sum_{ i = 1}^n  \int_0^t (a(s) v_n (s)  D^2  \phi_i, \psi) \delta w^i_n (s) \, ds\\
	  																					& =   \int_0^t ( a (s) \Delta_{n} (s) v_n (s), \psi) \, ds,
																		\, \, d_{i} = \alpha, c_{i j} = 0.
	\end{aligned}
	\end{equation}
	By the same argument we have
	$$
		b (\cdot) \phi_i D_x v_n (\cdot)  \in \bH^{-1/2 - \kappa}_p (T).
	$$
	 Hence, we get
	 $$
		I^{(4, 1, 2)}_n (t) = \int_0^t (b (s) \xi^{(1)}_n (s) D_x v_n (s), \psi) \delta w^i_n (s) \, ds,
	 $$
	and
	  \begin{equation}
		\label{3.1.9}
		I^{(4, 1, 2)}_n (t) = I^{(4, 1, 2, 1)}_n (t) + I^{(4, 1, 2, 2)}_n (t)
	  \end{equation}
	with
	 $$
		I^{(4, 1, 2, 1)}_n (t) = \int_0^t (b (s) D_x  [ \xi^{(1)}_n (s) v_n (s) ], \psi) \, ds,
	 $$
	  $$
		I^{(4, 1, 2, 2)}_n (t) = \int_0^t ( b (s) \Delta_{n} (s) v_n (s), \psi),
															\, \, d_i = - \alpha,  c_{i j} = 0.
	  $$
	We move to 
	$
	  I^{(4, 1, 3)}_n (t).
	$ 
	Note that by 
	$(A2) (p, \kappa) (ii)$
	we have
	 $
	    f (v_n, \cdot) - f (0, \cdot) \in \mathbb{L}_p (T)
	$
	 because 
	$
	  v_n \in C ([0, T], L_p)
	$
	 (see Remark \ref{remark 2.1}).
	Since 
	$
	  f(0, \cdot) \in L_p ([0, T], H^{-3/2 - \kappa}_p),
	$ 
	we have 
	$
	  f(v_n, \cdot) \in \bH^{-3/2 - \kappa}_p (T),
	$
	and, then, by Lemma 5.2 $(i)$ of \cite{Kr_99}  
	the same holds for 
	$
	  f(v_n, \cdot) \phi_i.
	$
	Hence, we may write
	 \begin{equation}
		\label{3.1.10}
	   \begin{aligned}
		I^{(4, 1, 3)}_n  (t) &= \alpha  \sum_{i  = 1}^n \int_0^t  (f (v_n, s) \phi_i, \psi) \delta w^{i}_n (s) \, ds,\\
																						&= \int_0^t (  \Delta_{n} (s) f (v_n, s), \psi) \, ds,
																																\, \, d_i = \alpha, c_{i j} = 0. 
	   \end{aligned}
	   \end{equation}

	Next, observe that
	$$
		 \delta w^i_n (s) D w^j_n (s)  = D s^{i j}_n (s) + \delta_{i j}/2.
	$$
	Then,
	 $$
		I^{(4, 1, 4)}_n (t) = \alpha^2  \sum_{i, j = 1}^n \int_0^t (v_n (s) \phi_{i, j}^{1, 1}, \psi) D s^{i j}_n (s) \, ds  
	 $$
	  $$
		+ \alpha^2/2 \sum_{i = 1}^n \int_0^t (v_n (s) \phi_{i}^2, \psi) \, ds,
	  $$
	so that by this and \eqref{3.1.1} and \eqref{3.1.4} we have
	 \begin{align*}
		 &  R_n (t): = I^{(8)}_n (t) + I^{(4, 2)}_n  (t) +	I^{(4, 1, 4)}_n (t)\\
																	& =   \alpha^2  \sum_{i , j = 1}^n \int_0^t  (v_n (s) \phi_{i, j}^{1,1}, \psi)  \, D s^{i j}_n (s) \, ds.
	 \end{align*}
	In fact, this cancellation is the reason why we have the 'correction term'  
	 $
	  -(\alpha^2/2 + \alpha \beta) v_n (s)  \sum_{i  = 1}^n  \phi_i^2 \, ds
	$
	 on the right hand side of \eqref{2.16}.

		Next, in the integral $R_n (t)$ we split $v_n$ into $v_n - h$ and $h$
		and  integrate by parts in the integral containing $h$.
		Then, we get 
	\begin{equation}
		\label{3.1.14}
					R_n (t)  =   (\xi^{(2)}_n (t) h (t), \psi)    +  R^{(1)}_n (t) + R^{(2)}_n (t),
	 \end{equation}
	where
	 $$
		R^{(1)}_n (t) = - \alpha^2 \sum_{i, j = 1}^n  \int_0^t  (\partial_s h (s) \phi^{1, 1}_{i, j}, \psi)  s^{i j}_n (s) \, ds,
	 $$
	  $$
		= \int_0^t (\Delta_n (s) \partial_s h (s), \psi) \, ds,  \,\,   d_i = 0, c_{i j} =  -\alpha^2,
	 $$
	  $$
		R^{(2)}_n (t) = \alpha^2 \sum_{i, j = 1}^n \int_0^t ([v_n (s)  - h (s)]  \phi^{1, 1}_{i, j}, \psi) D s^{i j}_n (s) \, ds. 
	  $$
	We note that
	 this time
	 the mutual quadratic variation term vanishes because
	$s^{i j}_n$ has a locally bounded variation.

	Finally, we combine all the terms that we got from the integration by parts.
	First, observe that by \eqref{3.1.1}, \eqref{3.1.7} and \eqref{3.1.9}
	\begin{equation}
		\label{3.1.12}
	\begin{aligned}
		& I^{(1)}_n (t) + I^{(4, 1, 1, 1)}_n (t)  +  I^{(2)}_n (t) + I^{(4, 1, 2, 1)}_n (t)  \\
		& = \int_0^t ([a (s) D^2_{x} \bar v_n (s)  + b (s) D_{x} \bar v_n (s)  +  F^{(3)}_n (s) ], \psi)  \, ds.
	 \end{aligned}
	  \end{equation}
	Next,  by the above  
	  we obtain 
	$$
		I^{(4, 1, 1, 2)}_n (t) =  \int_0^t (F^{(1)}_n (s), \psi) \, ds  \, \, (\text{see} \, \eqref{3.1.8}),
	$$
	 $$
		I^{(4, 1, 1, 3)}_n (t) + I^{(4, 1, 2, 2)}_n (t) = \int_0^t (F^{(2)}_n (s), \psi) \, ds  \, \, (\text{see} \, \eqref{3.1.8.1} \,  \text{and} \, \eqref{3.1.9}),
	 $$
	  $$
		I^{(3)}_n (t)      = \int_0^t (F^{(4)}_n (s), \psi) \, ds \, \, (\text{see} \, \eqref{3.1.1}),
	  $$
	   $$
		I^{(4, 1, 3)}_n (t)   = \int_0^t (F^{(5)}_n (s), \psi) \, ds,  \, \, (\text{see} \, \eqref{3.1.10}),
	   $$
	     $$
		 I^{(4, 1, 5)}_n (t)   =	\int_0^t (F^{(6)}_n (s), \psi) \, ds, \, \, (\text{see} \, \eqref{3.1.5}),
	     $$
	 	$$
		     R^{(1)}_n (t)    =  \int_0^t (F^{(7)}_n (s), \psi) \, ds,  \, \, (\text{see} \, \eqref{3.1.14}),
		$$
		  $$
			R^{(2)}_n (t) =  \int_0^t (F^{(8)}_n (s), \psi) \, ds, \, \, (\text{see} \, \eqref{3.1.14}),
		  $$
		   $$
			I^{(5)}_n (t) + I^{(6)}_n (t) = \sum_{i = 1}^{\infty} \int_0^t (G^{(1)}_{n, i} (s), \psi) \, dw^i (s),  \, \, (\text{see} \, \eqref{3.1.1}),
		   $$
		    $$
			I^{(7)}_n (t) =  \sum_{i  = 1}^{\infty} \int_0^t (G^{(2)}_{n, i} (s), \psi) \, dw^i (s), \, \, (\text{see} \, \eqref{3.1.1}),
		    $$
		     $$
			  I^{(4, 1, 6)}_n (t) =  \sum_{i = 1}^{\infty} \int_0^t (G^{(3)}_{n, i} (s), \psi) \, dw^i (s), \, \, (\text{see} \, \eqref{3.1.5}).
		     $$
		\end{proof}

		\begin{lemma}
			\label{lemma 3.4}
		Let $\alpha$ and  $\tilde \alpha$ be numbers
		 such that 
		  $
			 0 < \alpha < \tilde \alpha < 1
		$,
		 and let  $X$ be a Banach space.
		For
		 $\theta \in (0, 1)$,
		   and $t > 0$, we denote
		   $
			V^{\theta}_t  = C^{\theta} ([0, t], X)
		   $.
		    Then, for any 
			$
			  f \in V^{\tilde \alpha}_T
			 $,
			 the function 
			$
			  t \to || f ||_{ V^{\alpha}_t  }
			  $ 
			    is continuous on $[0, T]$.	
		\end{lemma}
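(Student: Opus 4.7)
The plan is to decompose the norm as $\|f\|_{V^\alpha_t} = M(t) + [f]_{\alpha,t}$, where $M(t) := \sup_{s \in [0,t]} \|f(s)\|_X$ and $[f]_{\alpha,t} := \sup_{0 \le s_1 < s_2 \le t} \|f(s_2) - f(s_1)\|_X / (s_2 - s_1)^\alpha$, and to prove continuity of each of these two non-decreasing functions of $t$ separately on $[0,T]$. Continuity of $M$ follows directly from the continuity of $f$: for right continuity, $\sup_{s \in [t_0, t]} \|f(s)\|_X \to \|f(t_0)\|_X \le M(t_0)$ as $t \to t_0^+$; for left continuity, $\|f(t_0)\|_X = \lim_{s \to t_0^-} \|f(s)\|_X$, whence $M(t_0) = \sup_{s < t_0} \|f(s)\|_X = \lim_{t \to t_0^-} M(t)$.

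The main content is continuity of $t \mapsto [f]_{\alpha,t}$, and it is here that the strict inequality $\tilde{\alpha} > \alpha$ is essential. For right continuity at $t_0$, fix $\varepsilon > 0$. Pairs $(s_1, s_2) \in \Delta_t := \{0 \le s_1 < s_2 \le t\}$ that do not belong to $\Delta_{t_0}$ are precisely those with $t_0 < s_2 \le t$. I would split such pairs according to a parameter $\delta > 0$. If $s_2 - s_1 \le \delta$, the higher regularity yields
\[
\frac{\|f(s_2) - f(s_1)\|_X}{(s_2 - s_1)^\alpha} \le \|f\|_{V^{\tilde\alpha}_T} (s_2 - s_1)^{\tilde\alpha - \alpha} \le \|f\|_{V^{\tilde\alpha}_T}\, \delta^{\tilde\alpha - \alpha},
\]
which is less than $\varepsilon$ once $\delta$ is small. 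If $s_2 - s_1 > \delta$ and $t - t_0 < \delta/2$, then $s_1 \le t - \delta < t_0$, so inserting $\pm f(t_0)$ and using $t_0 - s_1 \le s_2 - s_1$ gives $\|f(t_0) - f(s_1)\|_X / (s_2 - s_1)^\alpha \le [f]_{\alpha, t_0}$, while the remainder obeys $\|f(s_2) - f(t_0)\|_X / (s_2 - s_1)^\alpha \le \|f\|_{V^{\tilde\alpha}_T} (t - t_0)^{\tilde\alpha}/\delta^\alpha < \varepsilon$ for $t$ sufficiently close to $t_0$. Combined, $[f]_{\alpha, t} \le [f]_{\alpha, t_0} + 2\varepsilon$, which with monotonicity gives right continuity.

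For left continuity at $t_0 > 0$, given $\varepsilon > 0$ pick $(s_1, s_2) \in \Delta_{t_0}$ with $\|f(s_2) - f(s_1)\|_X / (s_2 - s_1)^\alpha > [f]_{\alpha, t_0} - \varepsilon$. If $s_2 < t_0$ the pair already belongs to $\Delta_t$ for all $t \in (s_2, t_0]$, and we are done; if $s_2 = t_0$, the continuity of $f$ ensures that the ratio varies continuously with $s_2$, so replacing $s_2$ by $t_0 - \eta$ for small $\eta > 0$ perturbs it by less than $\varepsilon$ and yields a witnessing pair in $\Delta_t$ for every $t \ge t_0 - \eta$. The principal obstacle in the argument is the right-continuity step: this is the only place where the strict gain $\tilde\alpha > \alpha$ enters, and without it the pairs straddling $t_0$ cannot be controlled by the two-regime split described above.
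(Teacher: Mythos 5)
Your proof is correct and complete. The paper itself does not reproduce an argument for this lemma (it defers to \cite{Y_18}), but your decomposition of $\|f\|_{V^{\alpha}_t}$ into the sup norm and the H\"older seminorm, with the two-regime split in $s_2-s_1$ that uses the gap $\tilde\alpha-\alpha$ to control pairs straddling $t_0$, is precisely the standard argument expected here, and every step (including the left-continuity perturbation of a near-optimal pair) checks out.
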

		The proof can be found in \cite{Y_18}.

		\mysection{Proof of Theorem \ref{theorem 2.1}}
				\label{section 4}
		Take any  sequence 
		$
		  \{\gamma_n, n \in \bN\}
		$ of positive numbers,
		 and let $v_n$ be the unique solution
			 of class 
			$
			\cH^{1/2 - \kappa}_p (T)
			$
			of \eqref{2.16}.
		Later, we will choose 
		$
		   \{\gamma_n, n \in \bN\}
		$
		 such that the desired convergence holds.

		Fix any  $ R > 0$ and denote
		$$
			\cV (t)  := C^{\theta/2 - 1/p} ([0, t], H^{1/2 - \kappa - \mu}_p),
		$$
		 $$
			\cW (t) :=   C^{\theta/2  - 1/p} ([0, t], C^{1/2 - \kappa - \mu - 1/p} ),
		 $$
		  $$
			\sigma_n := \inf \{t \geq 0: ||  v_n   - v  ||_{ \cV (t) }   \geq 1\},
		  $$
		    $$
			\pi (R) := \inf\{ t \geq 0: || v  ||_{ \cV (t) } \geq R \},
		    $$
		     $$
			\tau_n := \sigma_n \wedge \pi (R) \wedge T.
		     $$
		
		Take any
		 $\tilde \theta$
		 such that
		$ 
		  \mu > \tilde \theta > \theta
		$.
		By Remark \ref{remark 2.1} we have
		 $
			v_n, v \in C^{\tilde \theta/2 - 1/p} ([0, T], H^{1/2 - \kappa - \mu}_p)
		 $,
		for any $\omega$.
		Then, by Lemma \ref{lemma 3.4}
		 the functions
		  $
			t \to || z ||_{ \cV (t)},
		  $
		 for 
		$
		  z = v, v_n - v,
		$
		are  $\cF_t$-adapted processes with continuous sample paths.
		This implies that
		 $\pi (R)$,
		 $\sigma_n$ 
		and $\tau_n$ are stopping times.

		By  the fact that 
		$
			1/2 - \kappa - \mu > 1/p
		$
		and Remark \ref{remark 2.1},
		for any 
		$
			\omega,
		$
		 we have
		\begin{equation}
			\label{4.1}
							\sup_{ t \leq \tau_n }  || z (t, \star) ||_p + || z  ||_{ \cW (\tau_n) }    \leq N, \quad z = v, v_n.
		\end{equation} 
		 In this proof $N$ is a constant independent of $n$ that might change from inequality to inequality.
	
		Let $\rho$ be a nonnegative $C^{\infty}_0$ function
		supported on $(0, 1)$ such that
		$
		  \int \rho (x) \, dx = 1.
		$
		Let
		$
			 \nu = (\theta/2 - 1/p) \wedge (1/2 - \kappa - \mu - 1/p),
		$
		and 
		$
		  \varepsilon = 3/\nu.
		$
		We set
		$$
			 \tilde v_n (t, x) = n^{2\varepsilon} \iint_{ [0, 1]\times [0, 1]}  v_n (t - s, x - y) \rho ( n^{\varepsilon} s) \rho ( n^{\varepsilon} y) \, ds dy,
		$$  
		where it is assumed that
		   $
			v_n (t, x) = 0,
		   $
		for
		$ t < 0, x \in \bR $.
		It follows that 
		$
		  \tilde v_n (t, \star), t \in [0, T]
		 $ 
		is a predictable $L_p$-valued function. 
		Moreover, using a change of variables, Young's inequality and \eqref{4.1},
		 we have,
		 for  
		$
		  i, j \in \bN \cup \{0\}, 
		$
		\begin{equation}
					\label{4.0}
\			E \int_0^{\tau_n} || \partial_t^i D^{j}_x \tilde v_n (t, \star) ||_p^p \, dt \leq N n^{ \varepsilon  ( i + j) }.   
		\end{equation}

		\textit{Step 1}.
		We will use Lemma \ref{lemma 3.1} with $h =  \tilde v_n$.
		By 
		the a priori estimate from Theorem 5.1 of \cite{Kr_99} with
		 $
			m: = -3/2 - \kappa
		$
		  and Lemma \ref{lemma 3.1} we have
		\begin{equation}
			\label{4.2}
			||\bar v_n||^p_{\cH^{1/2 - \kappa}_p (\tau_n)}
													\leq N \sum_{k = 1}^{13} I_{ k, n },
		\end{equation}
		where
		$$
			I_{ j, n }  = E \int_0^{\tau_n}  || G^{(j)}_n (t, \star) ||^p_{ m+1, p } \, dt, \, \,    j = 1, 2, 3, 
		$$
		  $$
			I_{4, n }  = E \int_0^{\tau_n} || a (t, \star) D_x ( \Delta_n (t, \star) v_n (t, \star) ) ||_{m, p}^p \, dt,
		  $$
	   	    $$
			I_{5, n}  = E \int_0^{\tau_n} || a (t, \star) \Delta_n (t, \star)  v_n (t, \star)  ||_{m, p}^p \, dt,
		    $$
		     $$
			 I_{6, n}  = E \int_0^{\tau_n} || b (t, \star) \Delta_{n} (t, \star) v_n (t, \star)  ||_{m, p}^p \, dt,
		     $$
		      $$
			 I_{7, n}  = E \int_0^{\tau_n} || a (t, \star) D^2_{x} (\Delta_n (t, \star) \tilde v_n (t, \star)) ||_{m, p}^p \, dt, 
		      $$
		       $$
			  I_{ 8, n }  = E \int_0^{\tau_n} || b(t, \star) D_x (\Delta_n (t, \star) \tilde v_n (t, \star))  ||_{m, p}^p \, dt,
		       $$
		        $$
				I_{ 9, n }  = E \int_0^{\tau_n} || f (v_n, t, \star) - f(v, t, \star) ||_{m, p}^p \, dt,
		        $$
		         $$
				I_{ 10, n }  = E \int_0^{\tau_n} || \Delta_n (t, \star)  f (v_n, t, \star)  ||_{m, p}^p \, dt,
		         $$
		          $$
			       I_{ 11, n }  = E \int_0^{\tau_n} || \Delta_n (t, \star) v_n (t, \star)  ||_{m, p}^p \, dt,
		          $$
			   $$
				I_{12, n}  = E \int_0^{\tau_n} || \Delta_n (t, \star)  \partial_t \tilde v_n (t, \star)  ||_{m, p}^p \, dt,
			   $$
			    $$
				I_{13, n}  = \sum_{i, j = 1}^n n^{ 2 p - 2} E \int_0^{\tau_n} ||    (v_n (t, \star) -  \tilde v_n (t, \star))  \phi^{1, 1}_{i, j} (\star) ||_{m, p}^p \,  |D s^{i j}_n (t)|^p \, dt. 
			    $$
		Here, 
		 $
	           G^{(j)}_n = \{ G^{(j)}_{n, i}, i \in \bN\},  j = 1, 2, 3
		$ 
		are the functions defined in the statement of Lemma \ref{lemma 3.1},
		$
		  {\bf \Delta_{n}},
		$
		and all $\Delta_n$ are possibly different functions of type ${\bf \Delta_n}$
		(see Definition \ref{definition 3.1}).

		\textit{Step 2}.
		 First, by the estimate from Lemma 8.4 of \cite{Kr_99}
		\begin{equation}
			\label{4.3}
			I_{ 1, n } \leq N  E \int_0^{\tau_n} || v_n (t, \star)  -  v (t, \star) ||^p_p  \, dt.
		\end{equation}

		Next, 
		let $\cK$ be the function such that 
		 $$
		      (1 - D^2_{x})^{ (m+1)/2 } z = \cK \ast z,\, \, \forall z \in L_p, 
		$$ 
		where $\ast$ stands for convolution.
		It turns out 
		(see Section 12.9 of \cite{Kr_08})
		that
		$$
			\cK (x)  = \chi |x|^{- (1 - 2 \kappa)/2} \int_0^{\infty} t^{ - (5 - 2 \kappa)/4 } e^{- tx^2 - 1/(4t)} \, dt.
		$$
		where $\chi > 0$ is a constant.
		Then, we have
		 $$
			|(1 - D^2_{x})^{(m+1)/2} G^{(2)}_n (t, x)|^2_{l_2}
														 = \alpha^2 \sum_{ i = n + 1 }^{\infty}  (\int \cK (x - y) v (t, y) \phi_i (y) \, dy)^2. 
		 $$
		Again, by  Lemma 8.4 of \cite{Kr_99} 
		and \eqref{4.1},
		for any $\omega$,  
		and 
		$
		  t \in [0, \tau_n],
		$
		 we have
		$$
			|| G^{(2)}_n (t, \star) ||_{m+1, p} \leq N || v (t, \star) ||_p \leq N.
		$$
		Hence, by the dominated convergence theorem we obtain
		\begin{equation}
			\label{4.4}
			\lim_{n \to \infty} I_{ 2, n }  = 0.
		\end{equation}
 	Using  Lemma 8.4 of \cite{Kr_99} and \eqref{4.1} once more, we get
	\begin{equation}
		\label{4.5}
	  \begin{aligned}
		I_{ 3, n }  &
				  \leq E \int_0^{\tau_n} || \Delta_n (t, \star) v_n (t, \star) ||_p^p \, dt\\
																		&\leq E \int_0^{\tau_n} || \Delta_n (t, \star) ||_p^p \, dt \leq N n^{ N }  \gamma_n^{p/2},
	    \end{aligned}
	  \end{equation}
	where the last inequality is due to Lemma \ref{lemma 3.2} $(i)$.

	\textit{Step 3}. 
	We move to the terms 
	 $
	    I_{ 4, n }  - I_{6, n} 
	  $, 
	  $
		I_{ 11, n }. 
	  $
	First,  by the second inequality in \eqref{4.5}
	\begin{equation}
		\label{4.6}
		I_{ 11, n }  \leq N n^N \gamma_n^{p/2}.
	\end{equation}

	Next, using Lemma 5.2  $(i)$ of \cite{Kr_99}, we get
	$$
		I_{4, n}   + I_{5, n} + I_{6, n} 
									\leq N E \int_0^{\tau_n} || \Delta_n (t, \star) v_n (t, \star)  ||^p_{m + 1, p} \, dt.
	$$
	We point out that before applying Lemma 5.2 to 
	$I_{6, n}$
	one needs to replace 
	$m$ by $m+1$.
	Finally,  we  replace $m+1$ by $0$
	 in the last inequality and  use the second inequality in \eqref{4.5}.
	 We obtain
	   \begin{equation}
		\label{4.7}
		I_{4, n} + I_{ 5, n } + I_{ 6, n }
								\leq N n^{ N}  \gamma_n^{p/2}.
	   \end{equation}

	\textit{Step 3.} We handle the terms 
	$
	  I_{ 9, n } 
	 $ and
	   $
		 I_{10, n}. 
	   $
	 Recall that by Remark \ref{remark 2.1} 
	  $
	   v_n, v \in C([0, T], L_p)
	   $,
	  for any 
	$\omega$.
	By this and
	 $(A2) (p, \kappa) (ii)$  we get
	\begin{equation}
		\label{4.8}
		I_{9, n}  \leq
						    N E \int_0^{\tau_n} || v_n (t, \star) - v (t, \star) ||_p^p \, dt.
	\end{equation}

	Next,  by Lemma 5.2 $(i)$ of \cite{Kr_99} 
	  we have
	$$
		I_{10, n}  \leq N E  A_n  B_n,
	$$
	where 
	$$
		A_n = \sup_{t \leq \tau_n} || \Delta_n (t, \star) ||^p_{C^{1 + 1/2 + \kappa + \eta}},
	$$
	 $$
		B_n = \int_0^{\tau_n} || f ( v_n , t , \star) ||^p_{m, p} \, dt,
	$$
	and $
		 \eta \in (0, 1/2 -\kappa).
	       $
	Fix  any
	 $\delta_1 \in (0, p/2)$. 
	Then, by Lemma \ref{lemma 3.2} $(ii)$ 
	$$
		E A_n \leq N n^{N } \gamma_n^{p/2 - \delta_1}.
	$$
	Splitting
	 $
	  f(u, t, x)
	  $ 
	by
	   $
		f (0, t, x)
	   $ and
	    $
	      f(u, t, x) - f (0, t, x)
	    $
	 and using 
	     $
	       (A2) (p, \kappa) 
	    $,  
	we get that, 
	for any 
	$
	 \omega,	
	$
	and  $t \in [0, \tau_n]$,
	 $$
		 B_n \leq N \int_0^{\tau_n} (||  v_n  ( t , \star) ||^p_{p} + || f (0, t, \star) ||^p_{m, p}) \, dt \leq N,
	 $$
	where the last inequality is due to \eqref{4.1}.
	Hence,
	\begin{equation}
		\label{4.9}
		I_{10, n}  \leq N n^{N}   \gamma_n^{p/2 - \delta_1}.
	\end{equation}

	\textit{Step 4.}
	We deal with 
	  $I_{ 7, n } $,
	    $ I_{ 8, n }$,
	     $I_{ 12, n} $,
		$I_{ 13, n }$.
	First,  by Lemma 5.2 $(i)$ of \cite{Kr_99}
	 $$
		I_{7, n} \leq
					         N E \int_0^{\tau_n} ||\Delta_n (t, \star) \tilde v_n (t, \star) ||^p_{m + 2, p} \, ds.
	 $$
	We replace 
	$
	 m + 2 = 1/2 - \kappa
	$ 
	by $1$, 
	apply Lemma \ref{lemma 3.2} $(ii)$
	and use \eqref{4.0}.
	  We get
	  \begin{equation}
			\label{4.10}
	   \begin{aligned}
		I_{7, n}  & \leq 
		 			 N   E \int_0^{\tau_n} || \Delta_n (t, \star)||_{C^1}   (|| \tilde v_n (t, \star) ||_p^p \\ 
																					&+ || D_x \tilde v_n (t, \star)||_p^p) \, dt
	   																												\leq  N  n^{N} \gamma_n^{p/2 - \delta_1}.
	     \end{aligned}
	    \end{equation}
	By a similar argument
	  \begin{equation}
	           \label{4.11}
	   I_{8, n} + I_{12, n} \leq N  n^{N} \gamma_n^{p/2 - \delta_1}.
	 \end{equation}
	As in Step 2, before applying Lemma 5.2 $(i)$ to $I_{8, n}$
	one needs to replace $m$ by $m+1$.

	Next, 	
	note that,  
	for any 	
	$
	  t \in [0, \tau_n],
	$
	and
	$
	  				s, y \in (0, 1),
	$
	due to \eqref{4.1}
	we have
	$$
		|v_n (t - s/n^{\varepsilon}, x - y/n^{\varepsilon}) - v_n (t, x)| \leq n^{ -\nu \varepsilon } \, || v_n ||_{ \cW (\tau_n) } \leq N n^{ -\nu \varepsilon}.
	$$
	Then, by this  
	$$
		I_{13, n} \leq  n^{2p - 2 - \nu \varepsilon p }   \sum_{i, j = 1}^n ||\phi^{1, 1}_{i, j}||_p^p  \, E  \int_0^{\tau_n}  | D s^{i j}_n (t)|^p \, dt.
	$$
	By Lemma 1.5.2 of \cite{Th_93}, for all $i, j \in \bN$,  
	$$ 
		||\phi^{1, 1}_{i, j}||_p^p  \leq N,
	$$
	and this combined with Lemma \ref{lemma 3.3} $(iv)$ yields
	\begin{equation}
		\label{4.12}
			I_{13, n} \leq N  n^{  (2 - \nu \varepsilon) p } \leq N n^{-p},
	\end{equation}
	because $\varepsilon = 3/\nu$.

	\textit{Step 5}.
	Combining \eqref{4.2} - \eqref{4.12}, we obtain
	    \begin{align*}
		|| \bar v_n ||_{\mathcal{H}^{1/2 - \kappa}_p (\tau_n) }  
													&\leq N  I_{2, n}   +  N n^{- p }  + N n^N  \gamma_n^{p/2 - \delta_1}\\
	 																											&+ N E \int_0^{\tau_n} || v (t, \star) - v_n (t, \star) ||_p^p \, dt.
	    \end{align*}
	Clearly, the above inequality holds with $\tau_n$ replaced by $t \wedge \tau_n$,
	 for any $ t \in [0, T]$,
	and with $N$ independent of $n$ and $t$.
	Then, by Remark \ref{remark 2.1}, for any $t \in [0, T]$,
	  we have
	\begin{equation}
	     \label{4.13}
	   \begin{aligned}
		& E  ||  v_n   - v   ||^p_{ \cV (t \wedge \tau_n) }  \leq
		  N I_{2, n}  + N n^{-p} +  N n^{N}   \gamma_n^{p/2 - \delta_1} 
															+  N E || \Delta_n v_n  ||^p_{ \cV (\tau_n) } \\
		&+  N E || \Delta_n \tilde v_n ||^p_{\cV (\tau_n) }+ N E \int_0^{t } || v_n - v  ||_{ \cV (s \wedge \tau_n) }^p \, ds.
	    \end{aligned}
	  \end{equation}
	Note that  by Lemma \ref{lemma 3.4} the last integral is well-defined.

	Next,
	by the product rule inequality in H\"older spaces and Cauchy-Schwartz inequality we have
	$$
		E || \Delta_n z  ||^p_{ \cV (\tau_n) } \leq |E || \Delta_n ||^{2p}_{ \cV (\tau_n) }   E || z ||^{2p}_{ \cV (\tau_n)} |^{1/2}, \quad  z=  v_n, \tilde v_n.
	$$
	Let us fix some
	  $
	    \delta_2 < (0, 1/2 - \theta/2 + 1/p).
  	  $
	Then, by  Lemma \ref{lemma 3.2} $(iii)$  we get
	$$
		E || \Delta_n ||^{2p}_{ \cV (\tau_n) }  \leq N n^N \gamma_n^{ 2\delta_2 p }. 
	$$

	  Recall that  
	$$
	      ||v_n||_{\cV (\tau_n)} \leq N, \, \, \forall \omega.
	$$
	Using this and  Minkowski inequality for Bochner integral,
	we conclude that
	$$
		 E || \tilde v_n ||^{2p}_{ \cV (\tau_n)} \leq N.
	$$

	 Next, we combine the estimates from the previous paragraph with \eqref{4.13},
	 and we obtain
	$$
		E  ||  v_n    - v   ||^p_{ \cV (t \wedge \tau_n) } 
											\leq  N I_{2, n}  + N n^{-p} +  N n^{N}   (\gamma_n^{p/2 - \delta_1} + \gamma_n^{ \delta_2 p})
	$$
	 $$
			+ N  \int_0^{t } E || v_n  - v  ||^p_{ \cV (s \wedge \tau_n) } \, ds, t \in [0, T].
	$$
	By Gronwall's inequality
	$$
		E  ||  v_n   - v  ||^p_{ \cV (\tau_n) }  
									\leq  N I_{2, n}  + N n^{-p} +  N n^{N}   (\gamma_n^{p/2 - \delta_1} + \gamma_n^{ \delta_2 p}).
	$$
	Since $N$ is independent of $n$, 
	we can choose 
	$
	\{\gamma_n, n \in \bN\}
	$
	 such that 
	$$
		\lim_{n \to \infty}   n^{N}   (\gamma_n^{p/2 - \delta_1} + \gamma_n^{ \delta_2 p})   = 0.
	$$
	By this and \eqref{4.4} we obtain
	\begin{equation}
		\label{4.14}
	   \lim_{n \to \infty} E  ||  v_n   - v  ||^p_{ \cV (\tau_n) }   = 0.
	  \end{equation}

	Next,
	denote 
	$
	  D_n = \{ \sigma_n < \pi (R) \wedge T\}.
	$ 
	Observe that by Lemma \ref{lemma 3.4},
	for any $\omega$,
	 $$
		|| v_n - v ||_{ \cV (\sigma_n) } = 1.
	 $$
	This combined with \eqref{4.14} yields
	\begin{equation}
		\label{4.15}
		 P (\sigma_n \leq \pi (R) \wedge  T) =   E  ||  v_n   - v  ||^p_{ \cV (\tau_n) } I_{D_n}  
																			\leq E || v_n - v||^p_{ \cV (\tau_n) } \to 0		
	 \end{equation}
	as $n \to \infty$.

	\textit{Step 6.} 
	Fix any $\varepsilon > 0$. 
	Since $v \in \mathcal{H}^{1/2  - \kappa}_p (T)$,
	 by Remark \ref{remark 2.1}
	there exists $R > 0$ such that
	\begin{equation}
		\label{4.16}
		P (\pi (R) < T) \leq \varepsilon.
	\end{equation}

	Next, for any $ c > 0$,
	$$
		P (|| v_n - v||_{ \cV (T) } \geq c) \leq P ( || v_n - v ||_{ \cV (\tau_n)} \geq c) 
	$$
	 $$
				+ P ( \pi (R) < T) + P (\sigma_n < \pi (R) \wedge T).
	 $$
	   By Chebyshov's inequality and \eqref{4.14} 
	 $$
		P ( || v_n - v ||_{ \cV (\tau_n)} \geq c) \leq c^{-p} E || v_n - v||^p_{ \cV (\tau_n) } \to 0 
	 $$
	as $n \to \infty$.
	By this and \eqref{4.15}, and \eqref{4.16},
	 for any 
	$c, \varepsilon > 0$
	 we get
	  $$
		\nlimsup_{n \to \infty}  P (|| v_n - v||_{ \cV (T) } \geq c) \leq \varepsilon,
	  $$
	and this finishes the proof of the theorem.

	\mysection{Proof of Theorem \ref{theorem 2.2}.}
		\label{section 5}
	We follow the proof of Theorem 2.9 of \cite{Y_18} very closely,
	making only a few necessary changes.
	
	{\it Proof of the inclusion 
	$
		\text{supp} \, P \circ u^{-1}|_{\cV (T)} \subset \mathfrak{R}_{cl}
	$. }
	By Theorem \ref{theorem 2.1}  one can choose a sequence 
	$
	  \{\gamma_n, n \in \bN\}
	$ 
	such that
	if we denote
	$$
		h_n (t, x) =\sum_{ k = 1}^n (w^k_n (t) \phi_k (x) - t/2 \, \phi^2_k (x)),
	$$
	then
	 $$
		|| \cR (h_n) - u ||_{ \cV (T) } \to 0
	 $$
	as $n \to \infty$ in probability.
	Indeed, set
	 $
	   \alpha  = 1, \beta = 0
	 $.
	For any sequence of positive numbers  
	$
	 \{\gamma_n, n \in \bN\}
	$,
	 we denote by $v_n$  the unique solution 
	of class 
	$
	 \cH^{1/2 - \kappa}_p (T)
	$
	 of \eqref{2.16},
	and by $v$ -- the unique solution of class
	 $
	  \cH^{1/2 - \kappa}_p (T)
	 $
	 of \eqref{2.15}.
	Then, we have 
	$
	  \cR (h_n) \equiv v_n,
	 $ 
	$v \equiv u$
	 as elements of
	 $
	   \cH^{1/2 - \kappa}_p (T)
	 $.

	Next,  by
	  Portmanteau theorem 
	$$
		1 = \nlimsup_{n \to \infty}
			 P ( \cR (h_n) \in \fR_{cl})  \leq P \circ u^{-1}|_{ \cV (T) } (\fR_{cl}),
	$$
	and this yields the desired inclusion.

	{\it Proof of the inclusion   $\fR_{cl} \subset
			 \text{supp} \, P \circ u^{-1}|_{ \cV  (T)} $. }
	Fix any $h \in \cH (T)$.
	For any sequence of positive numbers
	$
	 \{\gamma_n, n \in \bN\}
	$,
	consider the following SPDE:
	\begin{equation}
		\label{5.1}
		dz (t, x) = [a(t, x) D^2_{x} z (t, x) + b(t, x) D_x z (t, x) 
	\end{equation}
	  $$
				 + f (z, t, x)  + z (t, x) \partial_t h (t, x) 
	  $$
	   $$ 
				 - \sum_{k = 1}^n z (t, x) \phi_k (x) \, D w^k_n (t)
														 + 1/2 \sum_{k = 1}^n z(t, x) \phi^2_k (x) ]\, dt
	   $$
	     $$
																									+ \sum_{k = 1}^{\infty}  z(t, x) \phi_k (x) \, dw^k (t),
		  \quad z (0, x) = u_0 (x).
	     $$

	We set   
	$
	 \alpha  = - 1, \beta = 1
	$
	 and  consider the equations \eqref{2.15} and \eqref{2.16} with
	$
	 f(z, t, x)
	$
	 replaced by 
	 $$
		\hat f (z, t, x) = f (z, t, x) + z \partial_t h (t, x),  \quad z, x \in \bR, t \in \bR_{+}. 
	 $$
	Note that 
	$
	   \hat f (z, t, x)
	$
	 satisfies the assumption $(A2) (p, \kappa)$ 
	because
	 $ 
	   \partial_t h  \in B ([0, T] \times \bR).
	 $
	Then, the equation \eqref{2.15} 
	has a unique solution 
	$\hat v
			\in
	  				  \fH^{1/2 - \kappa}_p (T)
	 $
	(see Remark \ref{remark 2.3} $(ii)$),
	and 
	\eqref{2.16}  has a unique  solution
	 $\hat v_n$ 
	of class 
	 $
	  \cH^{1/2 - \kappa}_p (T)
	  $ 
	(see Remark \ref{remark 2.3} $(i)$).
	Observe that 
	$\hat v_n$
	 satisfies the equation 
	\eqref{5.1}.
	 Also note that 
	$\hat v$
	 solves \eqref{1.2},
	and, hence, 
	$
	\hat v \equiv \cR (h)
	$.
	In what follows,  
	 $
	 \{\gamma_n, n \in \bN\}
	 $
	is
	such that, 
	for any 
	$\varepsilon > 0$,
	there exists 
	$
	 N (\varepsilon)  > 0
	$
	such that, 
	for any
	 $
	   n > N (\varepsilon)
	$, 
	\begin{equation}
				\label{5.3}
   	  P (|| \hat  v_n -  \cR (h)||_{ \cV (T) } \leq \varepsilon) > 0. 
	\end{equation}
	The existence of such sequence follows from  Theorem \ref{theorem 2.1}.

	Next, denote
	$$
		h_n (t, x) :=  \sum_{k = 1}^n (w^k_n (t) \phi_k (x)   - t/2  \, \phi_k^2 (x)) - h (t, x),
	$$
	 $$
		W_n (t): = W (t) - h_n (t, \star),
									\quad     \bar w^{k, n} (t) = (W_n (t), \phi_k (\star))_{L_2},
	 $$ 
	and let $P_n$ be a measure on 
	$
	 (\Omega, \cF)
	$ 
	defined by
	 $$
		dP_n = \exp (\int_0^T (\partial_t h_n (t, \star), dW (t))_{L_2}  - 1/2 \, \int_0^T || \partial_t h_n (t, \star)||^2_{2} \, dt) dP.
	 $$

	Next, we claim that
	$$
	       		E \exp ( 1/2 \, \int_0^T || \partial_t h_n (t, \star)||^2_{2} \, dt) < \infty.
	$$
	This easily follows from the fact that 
	$
		h \in B ([0, T] \times \bR),
	$ 
	and 
	$
	  |D w^k_n (t)| < \gamma_n^{-1},
	$
	for any $k, n, \omega$, 
	and $t \geq 0$.
	Then, by Proposition 10.17 of \cite{DPZ_14} Girsanov's theorem is applicable,
	and, then,
	$
	 \{\bar w^{k, n} (t), t \in [0, T], k \in \bN\}
	$
	 is a sequence of independent
	$\cF_t$-adapted 
	standard
	Wiener processes on $(\Omega, \cF, P_n)$.

	For 
	$
	  \gamma \in \bR,
	$
	 we set
	 $
	  \cH^{\gamma}_{p} (T, n)
	 $ 
	to be a stochastic Banach space 
	defined on  
	$
	  (\Omega, \cF, P_n)
	 $ 
	 with
	$
  		\{  w^k (\cdot), k \in \bN\}
	$
	replaced by 
	$
	 \{   \bar w^{k, n} (\cdot), k \in \bN\}
	$.
	Then, $ \hat v_n$ is a unique solution of class 
	$
	  \cH^{1/2 - \kappa}_p (T, n)
	$
	of the following SPDE:
	\begin{equation}
				\label{5.6}
	 \begin{aligned}
		dz (t, x) &= [a (t, x) D^2_{x} z (t, x) + b (t, x) D_x z (t, x) \\  
		& + f (z, t, x)] \, dt  + \sum_{k = 1}^{\infty} z (t, x) \phi_k (x) d\bar w^{k, n} (t),
																		 \quad z (0, x) = u_0 (x).
	 \end{aligned}
	\end{equation}
	Claim that 
	  \begin{equation}
				   \label{5.7}
							P_n \circ \hat v_n^{-1}|_{\cV (T) } = P \circ u^{-1}|_{ \cV (T) }.
	   \end{equation}
	To show this we use Theorem \ref{theorem 2.1}.
		First, there exists  
		a unique solution 
		$
			g \in \fH^{1/2 - \kappa}_p (T)
		$
		 of 
		$
			\partial_t g (t, x) = D_x^2 g (t, x)
		$
		with initial condition $u_0 (x)$.
		By subtracting $g$ from $u$,
		we may assume that 
		$u_0 \equiv 0$.
		Second, by Theorem \ref{theorem 2.1}
		  one may replace the equations \eqref{2.10}  and \eqref{5.6}
		by their Wong-Zakai type approximation schemes 
		(see Definition \ref{definition 2.2}) with mesh size.
		Each Wong-Zakai approximation is a fixed point
		of some contraction operator on 
		$
		  \cH^{1/2 - \kappa}_p (T)
		$  
		 (see, for instance Theorem 5.1 and Theorem 6.3 of \cite{Kr_99}).
		Now, \eqref{5.7} follows from 
		 the embedding theorem for 
		$
		  \cH^{1/2 - \kappa}_p (T)
		$
		(see Remark \ref{remark 2.1})
		and Picard iteration in the space
		 $
		    \cV (T).
		$ 
		
		Finally, we use \eqref{5.3},   
			 the fact that $P_n$ is absolutely continuous with respect to $P$,
		and \eqref{5.7}.
		We obtain that, for any $\varepsilon > 0$,
		$$
			P \circ u^{-1}|_{\cV (T) } ( \{ z \in \cV (T): || z - \cR (h) ||_{ \cV (T) } \leq \varepsilon \} ) > 0.
		$$
		This proves the second inclusion.

	\mysection{Appendix}
		\label{section 6}

	The following lemma is taken from \cite{Y_18}.
	For  readers' convenience the full proof is given here.
	              \begin{lemma}
				 	\label{lemma 3.3}	
		Let $p > 1$, 
		$h \in (0, 1 \wedge T)$
		$\varepsilon > 0$,
		$\theta \in (0, 1/2)$,
		$\theta' \in (0, \theta)$
		 be numbers.
		  Assume that $(A4) (h)$ holds. 
		Then, for any $i, j \in \bN$, the following assertions hold.
		 
			$$
				(i)\,  E || \delta w^i (\cdot, h)   ||^p_{ C [0, T] } 
													 \leq N (p, T, \theta)  h^{   p/2  - \varepsilon }.
			$$

			  $$
				(ii)\,		E ||\delta  w^i (\cdot, h)  ||^p_{ C^{1/2 - \theta } [0, T] } \leq N (p, T, \theta, \theta') h^{   \theta' p}.
			  $$
		
			 $$
				(iii)\,	 E || s^{i j} (\cdot, h)   ||^p_{ C [0, T] } \leq N (p, T, \theta) h^{p/2 - \varepsilon}.
			$$

			\begin{align*}
			 	(iv) \, & E  \int_0^T (|\delta w^i (t, h)  |^p + |s^{i j} (t, h)|^p)  \, dt  \leq N (p, T) h^{p/2},\\
					&	E \int_0^T |D s^{i j} (t, h)|^p \, dt \leq N (p, T).
			  \end{align*}

			$$
				(v)	\, E || s^{i j} (\cdot, h) ||^p_{ C^{1/2 - \theta} [0, T]}  \leq N (p, T, \theta, \theta') h^{  \theta' p}.
			$$

		\end{lemma}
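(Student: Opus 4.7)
The argument rests on three elementary ingredients:
(a) by Kolmogorov's continuity theorem, for every $\alpha \in (0, 1/2)$ and $q \geq 1$,
$E\|w^i\|_{C^{\alpha}[0, T+1]}^q \leq N(\alpha, q, T)$;
(b) the truncation satisfies $|\varkappa(x)| \leq |x| \wedge 1$;
(c) the polygonal approximation $w^i(\cdot, h)$ is piecewise affine with slope bounded by $h^{-1}|\varkappa(\Delta_l^i)| \leq h^{-1}|\Delta_l^i|$ on $[lh, (l+1)h)$, where $\Delta_l^i := w^i(lh) - w^i((l-1)h)$. Abbreviate $M_\alpha := \|w^i\|_{C^{\alpha}[0, T+1]}$.

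For (i) and the $\delta w^i$ part of (iv), fix a small $\eta > 0$ and, for $t \in [lh, (l+1)h)$, decompose
\[
\delta w^i(t, h) = \bigl(w^i(t) - w^i((l-1)h)\bigr) - \tfrac{t-lh}{h}\varkappa(\Delta_l^i).
\]
By (a) and (b) each term is pathwise bounded by $C h^{1/2-\eta} M_{1/2-\eta}$; taking the supremum and then the $p$-th moment gives (i) with $\eta = \varepsilon/p$, and the $\delta w^i$ bound in (iv) follows from the BM scaling $E|w^i(t) - w^i(s)|^{2p} \leq C_p |t-s|^p$ plus $E|\varkappa(\Delta_l^i)|^{2p} \leq E|\Delta_l^i|^{2p} \leq C_p h^p$. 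For (ii), I split pairs $s < t$ according to whether $t - s \geq h$ or $t - s < h$. In the large-gap regime, $|\delta w^i(t, h) - \delta w^i(s, h)| \leq 2\|\delta w^i(\cdot, h)\|_\infty$ and (i) suffice, the excess $h$-power being absorbed into $(t - s)^{1/2-\theta}$ via $h \leq t - s$; in the small-gap regime, estimate $|w^i(t) - w^i(s)| \leq |t - s|^{1/2-\theta'} M_{1/2-\theta'}$ and $|w^i(t, h) - w^i(s, h)|$ using the slope bound in (c) together with $|\Delta_l^i| \leq h^{1/2-\eta} M_{1/2-\eta}$, interpolating the two exponents to extract the claimed $h^{\theta' p}$ gain after taking $p$-th moments.

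For (iii), (v), and the $s^{ij}$ parts of (iv), exploit the piecewise-linear structure of $w^j(\cdot, h)$ and the decomposition $dw^j = dw^j(\cdot, h) + d \delta w^j$ to rewrite
\[
s^{ij}(t, h) = \int_0^t \delta w^i(r, h)\, dw^j(r) - \int_0^t \delta w^i(r, h)\, d_r \delta w^j(r, h) - \frac{\delta_{ij} t}{2},
\]
where the $-\delta_{ij} t/2$ term cancels the mutual-quadratic-variation contribution arising from Itô integration-by-parts (an Itô-to-Stratonovich type correction). The stochastic integral is controlled by BDG combined with (i); the Lebesgue integral is controlled by (i) and the slope bound in (c); together with $|Ds^{ij}(t, h)| \leq h^{-1}|\delta w^i(t, h)||\Delta_{l(t)}^j| + 1/2$ and Cauchy--Schwarz, this yields (iii) and the $s^{ij}$ part of (iv). Applying the same decomposition to increments $s^{ij}(t, h) - s^{ij}(s, h)$ produces $E|s^{ij}(t, h) - s^{ij}(s, h)|^p \lesssim |t-s|^{p/2} h^{p/2-\varepsilon}$, and Kolmogorov's continuity theorem then upgrades this to the Hölder estimate (v). The main obstacle is the exponent bookkeeping in (ii) and (v): one must interpolate the BM Hölder regularity against the piecewise-linearity bounds so as to simultaneously produce a $(1/2-\theta)$-Hölder exponent in time and an $h^{\theta' p}$ gain, and the slack $\theta - \theta' > 0$ is used precisely to absorb the interpolation losses.
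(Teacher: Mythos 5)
Your treatment of (i), (ii) and the $\delta w^i$ half of (iv) is sound and essentially the paper's argument (the paper handles the truncation through the event $\{\Delta_h w^i(t_l)>1\}$ and the modulus of continuity $\rho_w$, while you use $|\varkappa(x)|\le |x|$ and the H\"older norm of $w^i$; both work). The genuine gap is in (iii) for $i\neq j$, and it propagates to (v) and to the $s^{ij}$ estimate in (iv). After your decomposition the problematic object is the non-It\^o part, which is the Lebesgue--Stieltjes integral
$$
\int_0^t\delta w^i(r,h)\,d_rw^j(r,h)=\int_0^t\delta w^i(r,h)\,h^{-1}\varkappa\bigl(\Delta^j_{l(r)}\bigr)\,dr .
$$
Bounding this ``by (i) and the slope bound in (c)'' gives at best $T\,\|\delta w^i(\cdot,h)\|_{C[0,T]}\cdot h^{-1}\max_l|\varkappa(\Delta^j_l)|\lesssim T\,h^{1/2-\eta}\cdot h^{-1/2-\eta}=O(h^{-2\eta})$, which misses the required $O(h^{1/2-\varepsilon})$ by a full factor of $h^{1/2}$. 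That factor cannot be recovered from pathwise size estimates: the integral is a sum of $\sim T/h$ block contributions each of pathwise size $\sim h$, and the gain comes only from the fact that these blocks are (conditionally) independent and centered, so the sum is of order $\sqrt{T/h}\cdot h=\sqrt{Th}$ rather than $T$. This orthogonality argument is exactly what the paper supplies off the diagonal: it writes $s^{ij}=I_1+I_2+I_3$ with each $I_k$ a sum of centered, independent block variables (using the symmetry of $\varkappa(\Delta_h w^j)$ and the Markov property of $w^i$), then applies Doob's maximal inequality, a combinatorial moment estimate for even $p$, and log-convexity of $L_p$ norms for general $p$.

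Your remark that $-\delta_{ij}t/2$ cancels the quadratic-variation term handles only the diagonal case, where the identity $s^{ii}(t,h)=\int_0^t\delta w^i(r,h)\,dw^i(r)-\tfrac12|\delta w^i(t,h)|^2$ indeed closes the argument via BDG and (i); for $i\neq j$ there is no such identity and no correction term, and the cross-block cancellation must be argued separately. The same cancellation is needed for your increment bound $E|s^{ij}(t,h)-s^{ij}(s,h)|^p\lesssim|t-s|^{p/2}h^{p/2-\varepsilon}$ whenever $t-s\gg h$, so your Kolmogorov route to (v) inherits the gap (the paper instead interpolates between $\|s^{ij}\|_{C[0,T]}$ and $\|Ds^{ij}\|_{L_\infty[0,T]}$, which avoids increment estimates but still rests on (iii)). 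A smaller point: for the second estimate in (iv) you must apply Cauchy--Schwarz block by block with the pointwise moments $E|\delta w^i(t,h)|^{2p}\lesssim h^p$, not with the supremum from (i), otherwise you obtain $N h^{-2\varepsilon}$ rather than the stated $N(p,T)$.
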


		\begin{proof} 
		Denote  
		$
			  t_k  = k h, k \in \{ -1, 0, 1, \ldots\}.
		$	 
		For any $a > 0$, 
		$
		   f : \bR \to \bR,
		$
		   denote
		$$
			\Delta_a f (x) = f (x + a) -  f(x),
		$$
		 $$
			 \rho_{f} ( h, T) = \sup_{t, s \in [0, T]: |t - s| \leq h} |f (t) - f(s)|.
		 $$
		For the sake of convenience,
		in  the proofs $(i), (ii)$
		we denote 
		$
		   w: = w^i, 
				    w (\cdot, h)  := w^i (\cdot, h). 
		$

		$(i)$
		 For 
		$ 
		  t  \in [t_l, t_{l+1}),
		$
		  we have
		\begin{equation}
				\label{3.3.1}
		  |\delta w (t, h)|  
						\leq  |w(t) - w (t_{l-1})| + \varkappa (\Delta_{h} w (t_{l-1})),
		\end{equation}
		and 
		\begin{equation}
		   \label{3.3.2}			
  					\varkappa (\Delta_{h} w (t_{l-1})) \leq \Delta_{ h } (w (t_{ l - 1 })) + I_{A_{l - 1}},
		\end{equation}
		where 
		$$
			A_l = \{ \Delta_{ h } w (t_{l}) > 1\}.
		$$
		Recall that $w (t) = 0$, for $t \leq 0$.
		By this 
		$
			P (A_{-1}) = 0.
		$

		Next,
		denote 
		$
			M = \lfloor T/h \rfloor. 
		$
		 By Chebyshov's inequality, for any $q > 0$, 
		\begin{equation}
						\label{3.3.3}
								P (\cup_{l = 0}^{M-1} A_{l}) \leq 	\sum_{ l = 0}^{M-1} P (A_l) \leq  N/h \, E |w (h)|^q \leq N h^{ q/2 -  1}.
		\end{equation}
		Then, by this and \eqref{3.3.2}
		\begin{equation}
		\label{3.3.4}
		\begin{aligned}
						& E  \max_{l = 0, \ldots, M}  |\varkappa (\Delta_{h} w (t_{l-1}))|^p  	\\
																		&\leq  N (E \rho^p_{w} (h, T) + P (\cup_{l = 0}^M A_l)) \leq N h^{p/2 - \varepsilon},
		\end{aligned}
		  \end{equation}
		where in the second inequality we used the estimate of $\rho_w$ which we state below.
		   By Theorem 2.3.2 of \cite{Kr_94}, 
		 for any $\alpha > 0$,
		 there exists a positive random variable 
		$
		  N_{\alpha, T}
		$
		 such that,
		for any 
		$
		    r > 0,  \, E N_{\alpha, T}^r < \infty,
		$
		 and
		  \begin{equation}
					\label{3.3.5}
								\rho_{w} (\lambda, T) \leq  N_{\alpha, T} \, \lambda^{ 1/2 - \alpha}, \, \forall \omega \in \Omega,  \lambda \in [0, T].
		\end{equation}
		Then, the claim  follows from 
               \eqref{3.3.1}, \eqref{3.3.4} and \eqref{3.3.5}.
		By the way, similarly, for all $l$, we have
		\begin{equation}
				\label{3.3.6}
			E | \varkappa (\Delta_{h} w (t_l))|^p \leq N h^{p/2}.
		\end{equation}

		$(ii)$
		Fix any
		 $
		    \alpha \in (0, \theta).
		  $
		 First, we consider the case when 
		  $
			 |t - s| \geq h,
		  $
			 $
			    t, s \in [0, T].
			  $
		   We have
		   $$ 
		        1/(t-s)^{1/2 - \theta} | \delta w (t, h)  - \delta w (s, h)| 
			  \leq 2 h^{ \theta - 1/2 } ||\delta w (\cdot, h)  ||_{ C [0, T] },
		    $$
		and this combined with $(i)$ yields the claim.

		Next,
		we take any $t, s \in [0, T]$ such that $|t-s| < h$.
		There are  two subcases: either 
		\begin{equation}
		  \label{3.3.7}
					 (t, s)   \in  B_1 = \cup_{l = 0}^M \{ (t, s) \in [0, T]^2:  t, s \in [t_l, t_{ l+1 }] \}
 		\end{equation}
		or
		 \begin{equation}
		    \label{3.3.8.0}
		   \begin{aligned}
		 			(t, s) \in B_2 = &\cup_{l = 0}^{M-1} \{ (t, s) \in [0, T]^2:	  |t - s| < h, \\
																			&  t_l < s \leq t_{l+1} \leq t < t_{ l + 2} \}.
		   \end{aligned}
		   \end{equation}

		To handle  \eqref{3.3.7} we write 
		  \begin{equation}
				      \label{3.3.8}
                                     |\delta w (t, h)  - \delta w (s, h)| \leq | w (t, h) - w (s, h)| + |w (t) - w (s)|.
                       \end{equation}
		Using \eqref{3.3.5} and the fact that 
		$
			|t - s| \leq h,
		$
		   we get
		\begin{equation}
				\label{3.3.9}
			|w (t) - w (s)|  \leq N_{\alpha, T} h^{  \theta - \alpha } | t -  s|^{1/2 - \theta}.
		\end{equation}                      
	      Next,   
			by   \eqref{3.3.4} we obtain
                          \begin{equation}
					\label{3.3.10}
			  \begin{aligned}
                        E  & \sup_{ (t, s) \in B_1 }      | w (t, h) - w (s, h) |^p  \leq \\
															& |t - s|^p/h^p \,  E  \max_{l = 0, \ldots, M}   |\varkappa (\Delta_{h} w (t_{l - 1}))|^p   
																													     \leq N  h^{ (\theta - \alpha) p} \, |t - s|^{(1/2 - \theta)p}. 
			   \end{aligned}
		       \end{equation}
		       Then, the claim in this subcase follows from \eqref{3.3.8} - \eqref{3.3.10}.

		We move to the second subcase 
		\eqref{3.3.8.0}.
		Observe that
		$$
			   | w (t, h)  -  w (s, h)| 
		$$
		  $$
			\leq  |w (t, h) - w (t_{l+1}, h)|  + |w (s, h) - w (t_{l+1}, h)|,
		  $$
		and 
		$
		   (t, t_{l+1}), (s, t_{l+1}) \in B_1.
		$
		This combined with \eqref{3.3.10} and \eqref{3.3.9} proves the assertion in this  subcase.

		$(iii)$	
		We follow the proof of Proposition 6.3.1 of \cite{S_05}.
		First, we consider the case $i = j$.  By It\^o's formula, for any $t$, a.s. 
		$$
			|w^i (t) - w^i (t, h)|^2 = 2  \int_0^t (w^i (s) - w^i (s, h)) \, d_s (w^i (s)  - w^i (s, h)) + t,
		$$
		and, then,
		$$
			s^{i i} (t, h) = \int_0^t \delta w^i (s, h) \, dw^i (s) - 1/2 \, |w^i (t) - w^i (t, h)|^2.
		$$
 		Using Burkholder-Davis-Gundy inequality and assertion $(i)$,
		we get
		\begin{equation}
		  \begin{aligned}
					\label{3.3.11}
			E || s^{i i} (\cdot, h) ||^p_{ C [0, T] } & \leq N E (|| \delta w^i (\cdot, h) ||^p_{ C [0, T] } \\
																						&	+ || \delta w^i (\cdot, h) ||^{2p}_{ C [0, T] })   \leq N h^{p/2 - \varepsilon}.	
		\end{aligned}
		\end{equation}
		By the same argument
		\begin{equation}
		  \begin{aligned}
					\label{3.3.11.1}
		  \max_{t \leq T} E |s^{i i} (\cdot, h)|^p \leq N h^{p/2}.
		  \end{aligned}
		\end{equation}

		Now we assume $i \neq j$.
		Note that, 
		for 
		$t \in [t_k, t_{k+1}]$,
		we have
		$$
			w^i (t, h) = -\Delta_h w^i (t_{k - 1}) + w^i (t_k) + 1/h (t - t_k) \varkappa(\Delta_h w^i (t_{k-1})).
		$$
		Then, for each $\omega, t$ we may write
		\begin{equation}
					\label{3.3.12}
			s^{i j} (t, h) =  I_1 (t) + I_2 (t)  + I_3 (t),
		\end{equation}
		where
		$$
			I_{1} (t) = \sum_{ l = 0}^{ \lfloor t/h \rfloor  }    \varkappa (\Delta_{h} w^j (t_{l - 1}))   \int_{t_l}^{t_{l+1}}  (w^i (s) -  w^i (t_l))/h  \, I_{s \leq t} \, ds,
		$$
		 $$
			I_2 (t ) =  \sum_{ l = 0}^{ \lfloor t/h \rfloor  }    \varkappa (\Delta_{h} w^j (t_{l - 1}))  \Delta_h w^i (t_{l - 1}),
		$$
		$$
				I_3 (t)  = -  h^{-2}  \sum_{ l = 0}^{ \lfloor t/h \rfloor  }  \int_{t_l}^{t_{l+1}} (s - t_l) I_{s \leq t }  \, ds \, \varkappa (\Delta_{h} w^j (t_{l - 1}))  \varkappa (\Delta_{h} w^i (t_{l - 1})).
		$$
		 Observe that  
		$
		  \varkappa (\Delta_{h} w^i (t_{l - 1}))
		$
		is a symmetric random variable as a composition of an odd function with a symmetric random variable.
		It follows from the Markov property of Wiener process that
		$
		   I_1 (t) 
		$ 
		is a sum of independent centered random variables, and, hence,	by  Doob's maximal inequality
		\begin{equation}
					\label{3.3.13}
			E \sup_{t \in [0, T]}  |I_1 (t) |^p \leq N  E | I_1 (T)  |^p \leq N h^{p/2}.
		\end{equation}
		 Let us explain how to get the second inequality in \eqref{3.3.13}. 
		For 
		$
		   p = \{2, 4, \ldots\},
		$ 
		\eqref{3.3.13} follows from  an elementary combinatorial argument 
		(see, for example, the proof of Lemma 6.3.2 of \cite{S_05}) combined with \eqref{3.3.6}.
		To prove the claim for any $p > 1$, 
			 we pick some 
			$
				k \in \bN \cup \{0\}
			$
			 such that
			$
				2k < p \leq 2k + 2,
			$
			and let $\nu$ be a number determined by the equation
			$$
				1/p = (1 - \nu)/(2k) + \nu/(2k +2). 
			$$
			Then, the desired inequality follows from the assertion for 
			$
			  p = 2k, 2k + 2
			$
			combined
			  with  the log-convexity of $L_p$ norms.
		Further, the same argument
		yields 
		\begin{equation}
					\label{3.3.14}
			E  \sup_{t \leq T} (| I_{2} (t) |^p + |I_3 (t)|^p)  \leq N  h^{p/2}.
		\end{equation}
		The claim $(iii)$ follows from \eqref{3.3.11} - \eqref{3.3.14}.

        $(iv)$	
		First, combining \eqref{3.3.1} - \eqref{3.3.3}, we obtain
		$$
			\sup_{t \leq T} E |w^i (t, h)|^p \leq N h^{p/2},
		$$
		and this implies  the estimate for $\delta w^i (t, h)$.
		By \eqref{3.3.11.1},  \eqref{3.3.13} and \eqref{3.3.14} 
		the above inequality holds with $w^i$ replaced by $s^{i j}$.
		Hence, the first estimate of $(iv)$ is proved.

		Next, by Cauchy-Schwartz inequality 
		$$
			E \int_0^T |D s^{i j} (t, h)|^p \, dt
		$$
		 $$
			 \leq
		     \sum_{k = 0}^{ \lfloor T/h \rfloor } h^{-p} (\int_{t_k}^{t_{k+1}}   E |\delta w^i (t, h)|^{2p}  \, dt)^{1/2}
																						 \, h^{1/2}  \, (E |\varkappa (\Delta_h w^j (t_{k-1}))|^{2p})^{1/2}.
		$$
		This combined with \eqref{3.3.1} and \eqref{3.3.2} proves the second part of claim.

		$(v)$	
			By Cauchy-Schwartz inequality
		we have
	     $$
	   	  E || D s^{i j } (\cdot, h) ||_{ L_{ \infty}[0, T] }^p \leq  h^{-p} M_{1} M_{2},
	     $$
	where 
	      $$
		M_{1} = (E || \delta w^i (\cdot, h)  ||_{  C [0, T] }^{2p})^{1/2}, 
		 \quad 
								      M_{2} = (E  \max_{l = 0, \ldots, \lfloor T/h \rfloor}  |\varkappa (\Delta_{h} w (t_{l-1}))|^{2p} )^{1/2}.
	      $$
	By  $(i)$ and  \eqref{3.3.4}  
	  $$
		M_{1} + M_{2}  \leq N (p,  \varepsilon, T)  h^{ p/2 - \varepsilon}, 
	  $$
   	  Then, by the above
	\begin{equation}
		\label{3.3.15}
					E ||D s^{ i j } (\cdot, h) ||^p_{ L_{\infty} [0, T] } \leq N (p, \varepsilon, T) h^{- 2\varepsilon}.
	\end{equation}
	By the interpolation inequality 
		 (see, for example, Theorem 3.2.1 in \cite{Kr_96}),
		 for any $\lambda > 0$,
		$$
		      || s^{ i j } (\cdot, h) ||_{ C^{1/2 - \theta} [0, T]}
			 \leq  	N (\theta, T) (h^{1/2 + \theta} ||D s^{i j} (\cdot, h) ||_{ L_{\infty} [0, T]} 
				+ h^{ \theta - 1/2 } || s^{i j} (\cdot, h) ||_{ C[0, T]}). 
		$$
		 We finish the proof by combining this with  $(iii)$ and \eqref{3.3.15}.

	  	\end{proof}

			\begin{lemma}
		\label{lemma 3.2}
	Assume that
	 $
	  (A4) (\gamma_n)
	$
	 holds for some sequence
	 $
		\{\gamma_n, n \in \bN\}
	$.
	Let 
	$
	  \theta \in (0, 1), T > 0, p > 1
	$
	 be numbers,
	and let
	 $\Delta_n$ 
	be any function of class 
	${\bf \Delta_n}$.
	Then, the following assertions hold.
	
	$(i)$  
		$$
			 J_n : = E \int_0^T    ||\Delta_n (t, \star) ||^p_p   \, dt \leq N n^{N} \gamma_n^{p/2},
		$$
	where $N =  N (p, T)$.

	$(ii)$  For any $\varepsilon > 0$, 
		   and any 
		$\delta \in (0, 1)$,
		$$
			 E \sup_{t \leq T} || \Delta_{n} (t, \star) ||^p_{C^{2 - \delta}} \leq N n^{N} \gamma_n^{p/2 - \varepsilon},
		$$
		where $N  = N (p, T, \delta, \varepsilon)$.

	$(iii)$ For any $ \varepsilon > 0$, and $\varepsilon' \in (0, \varepsilon)$,
		$$
			  E || \Delta_n ||^p_{ C^{1/2 - \varepsilon} ([0, T], H^{\theta}_p ) } \leq N n^{N} \gamma_n^{\varepsilon' p},
		$$
		where $N = N (p, T, \varepsilon, \varepsilon')$.
		
	\end{lemma}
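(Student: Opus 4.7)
The plan is to exploit the separable form of any function of type $\mathbf{\Delta}_n$: by Definition \ref{definition 3.1}, $\Delta_n(t,x)$ is a sum of at most $n^2 + n$ products of a purely spatial factor --- either $\phi_{i,j}^{l_{ij}, m_{ij}}(x)$ with exponents in $\{0,1,2\}$, or $D^{k_i}\phi_i(x)$ with $k_i \in \{0,1,2\}$ --- and a purely temporal process --- either $\delta w^i_n(t)$ or $s^{ij}_n(t)$ --- with coefficients bounded by $2$. Standard bounds on Hermite functions (Lemma 1.5.2 of \cite{Th_93} together with the recurrence $D\phi_k = \sqrt{k/2}\,\phi_{k-1} - \sqrt{(k+1)/2}\,\phi_{k+1}$) show that each spatial factor admits, in every norm we shall need ($L_p$, $C^{2-\delta}$, $H^{\theta}_p$), a bound that is at most polynomial in $i,j$. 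All three assertions will then reduce to combining these polynomial spatial bounds with the sharp temporal estimates from Lemma \ref{lemma 3.3}, absorbing the resulting powers of $n$ into the slack factor $n^N$.

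For $(i)$, I would apply the triangle inequality and Fubini to reduce $J_n$ to a sum of terms of the form $\|\phi_{i,j}^{l,m}\|_p^p \cdot E\int_0^T |q_{ij}(t)|^p\,dt$ and $\|D^{k_i}\phi_i\|_p^p \cdot E\int_0^T |\delta w^i_n(t)|^p\,dt$. The spatial $L_p$ norms are polynomial in $i,j$, and Lemma \ref{lemma 3.3}$(iv)$ controls each temporal expectation by $N\gamma_n^{p/2}$. Summing over $i,j \leq n$ yields the claim.

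For $(ii)$, the same decomposition is used, but the spatial $L_p$ norm is replaced by the $C^{2-\delta}$ norm. Differentiating a product of two Hermite functions up to two times produces, via the recurrence above, a polynomial-in-$i,j$ bound on $\|\phi_{i,j}^{l,m}\|_{C^{2-\delta}}$ and similarly on $\|D^{k_i}\phi_i\|_{C^{2-\delta}}$. The temporal factor $E\sup_{t \leq T}|q(t)|^p$ is then bounded by $N\gamma_n^{p/2 - \varepsilon}$ via Lemma \ref{lemma 3.3}$(i)$, $(iii)$, and the result follows.

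For $(iii)$, the key observation is that any single product $\phi(x)q(t)$ satisfies
$$
\|\phi \cdot q\|_{C^{1/2-\varepsilon}([0,T], H^{\theta}_p)} = \|\phi\|_{H^{\theta}_p}\,\|q\|_{C^{1/2-\varepsilon}[0,T]}.
$$
Since $\theta \in (0,1)$, the spatial norm $\|\phi\|_{H^{\theta}_p}$ is dominated by $\|\phi\|_{W^1_p}$, which is again polynomial in $i,j$ for each spatial factor appearing in $\Delta_n$. Lemma \ref{lemma 3.3}$(ii)$ and $(v)$ with Hölder exponent $1/2-\varepsilon$ supply the temporal estimate of order $\gamma_n^{\varepsilon' p}$ for $\varepsilon' \in (0,\varepsilon)$, completing the proof. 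The only real obstacle is bookkeeping: one must verify that in each of the three parts the polynomial-in-$n$ spatial bounds, combined with the $O(n^2)$ number of summands, are comfortably absorbed into the single $n^N$ slack, while the temporal estimates supply precisely the advertised powers of $\gamma_n$.
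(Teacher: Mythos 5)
Your proposal is correct and follows essentially the same route as the paper: decompose $\Delta_n$ into its $O(n^2)$ separable summands, bound each spatial factor polynomially in $i,j$ via the Hermite estimates of Thangavelu and the derivative recurrence, bound each temporal factor by the corresponding part of Lemma \ref{lemma 3.3} (part $(iv)$ for $(i)$, parts $(i)$ and $(iii)$ for $(ii)$, parts $(ii)$ and $(v)$ for $(iii)$), and absorb the polynomial growth into $n^N$. The only cosmetic difference is that the paper passes through the $C^3$ norm by interpolation in part $(ii)$ and through $\|\cdot\|_{W^{k+1}_p}$ and $\|\cdot\|_{W^1_p}$ in part $(iii)$, exactly as you do with $\|\phi\|_{H^\theta_p}\le N\|\phi\|_{W^1_p}$.
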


	\begin{proof}
	$(i)$ Due to Definition \ref{definition 3.1} we have
	\begin{equation}
		\label{3.2.1}
	   \begin{aligned}
		J_n  &\leq N n^{2p - 2}  \sum_{i, j = 1}^{ n}  
										\sum_{ k, l, m = 0}^2  (||D^k_x \phi_i||_p^p +   ||  \phi_{i, j}^{l, m} ||_p^p)  \\
			& \times  E \int_0^T (|s^{i j}_n (t)|^p + |\delta w^i_n (t)|^p) \, dt.
	 \end{aligned}
	\end{equation}
	
	Next, it is well-known (see Lemma 1.5.2 of \cite{Th_93}) 
	that, 
	for any  $\rho \in [2, \infty]$, 
	and $k \in \bN$,
	\begin{equation}
			\label{3.2.4}
											|| \phi_k ||_{\rho}  \leq N (\rho).
	\end{equation}
	In addition (see Section 1.1 of \cite{Th_93}), 
	$$
		D_{x} H_k (x) = 2k H_{k-1} (x).
	$$
	Then, by formula \eqref{2.4}  and what was just said  we have
	\begin{equation}
					\label{3.2.3}
		|| D^k_x \phi_j  ||_{\rho}	 \leq N (k, \rho) j^{k/2}, \, \, 
														 k \in \bN \cup \{0\},   j \in \bN.
	\end{equation}
	Combining \eqref{3.2.1} and \eqref{3.2.3} with Lemma \ref{lemma 3.3} $(iiii)$, we prove the assertion.

	$(ii)$ The proof is similar the one above. 
	First, note that by the interpolation inequality
	for H\"older spaces 
	(see Theorem 3.2.1 in \cite{Kr_96}),
	we may replace $2 - \delta$ by $3$.
	Second, by the product rule
	 \begin{equation}
				\label{3.2.7}
		|| \phi^{l, m}_{i, j}  ||_{ C^3 } \leq N ||\phi_i||^l_{C^3} ||\phi_j||^m_{C^3}.
	\end{equation}
	Third, by Lemma \ref{lemma 3.3} $(i)$, $(iii)$, 
	for any $i, j \in \bN$,
	\begin{equation}
				\label{3.2.8}
		 E ||q_{i j}||_{C[0, T]}^p    \leq N (p, T, \varepsilon) \gamma_n^{p/2 - \varepsilon},
	 \end{equation}
	where 
	 $
	    q_{i j} \in \{ \delta w^i_n, s^{i j}_n \}.
	$  
	Now the assertion follows from \eqref{3.2.7}, \eqref{3.2.3}, and \eqref{3.2.8}.	

	$(iii)$  First, by the properties of 
		$H^{\theta}_p$ spaces, 
		for 
		  $
			k, l, m \geq 0,
		   $
		\begin{equation}
					\label{3.2.5}
		\begin{aligned}
		 ||D^{k} \phi_i||_{\theta, p} +  ||\phi_{i, j}^{ l, m}||_{\theta, p} 
							& \leq  || \phi_i ||_{ k + 1, p } +  ||  \phi_{i, j}^{ l, m} ||_{ 1, p}\\
															& \leq
																N (p) ( ||\phi_i||_{W^{k+1}_p} +  || \phi_{i, j}^{ l, m} ||_{ W^1_p }).
		\end{aligned}
		 \end{equation}
		Clearly,
		 we may assume that
		 $m \neq 0$.
		Note that  by \eqref{3.2.3} the right hand side of \eqref{3.2.5} is less than
		$$
			N (p, k) i^{(k+1)/2}  + N (p) ||  \phi_{i }||^l_{C^1} || \phi_j ||^{m-1}_{ C^1 } (|| \phi_j ||_p + ||D_x \phi_j||_p) 
		$$
	  	 $$
									\leq N (p, k,  l, m) (i^{(k+1)/2} + i^{l/2}  j^{m/2}).
		$$
		This combined with Lemma \ref{lemma 3.3} $(ii)$ and $(v)$ proves the claim.
	\end{proof}

	\end{document}